\documentclass[12pt]{amsart}

\topmargin  -5mm
\evensidemargin 3mm
\oddsidemargin  3mm
\textwidth  162mm
\textheight 218mm
\parskip 6pt
\parindent=0pt

\usepackage[english]{babel}
\usepackage{vmargin,graphicx,amsmath,amssymb,color,subfigure,enumerate,csquotes, dsfont, mathrsfs}
\numberwithin{equation}{section}

\usepackage{color}
\definecolor{gr}{rgb}   {0.,   0.69,   0.23 }
\definecolor{bl}{rgb}   {0.,   0.5,   1. }
\definecolor{mg}{rgb}   {0.85,  0.,    0.85}
\definecolor{or}{rgb}   {0.9,  0.5,   0.}

\definecolor{webred}{rgb}{0.75,0,0}
\definecolor{webgreen}{rgb}{0,0.75,0}
\usepackage[citecolor=webgreen,colorlinks=true,linkcolor=webred]{hyperref}

\newtheorem{theorem}{Theorem}[section]
\newtheorem{lemma}[theorem]{Lemma}
\newtheorem{notation}[theorem]{Notation}

\newtheorem{proposition}[theorem]{Proposition}
\newtheorem{corollary}[theorem]{Corollary}

\newtheorem{definition}[theorem]{Definition}
\newtheorem{remark}[theorem]{Remark}


\newcommand{\N}{\mathbb{N}}
\newcommand{\R}{\mathbb{R}}

\newcommand{\diag}{\mathsf{diag}}

\newcommand{\Hil}{\mathcal{H}}
\newcommand{\HS}{\mathcal{L}_2(\Hil)}
\newcommand{\SH}{\mathcal{S}(\Hil)}
\newcommand{\AH}{\mathcal{A}(\Hil)}
\newcommand{\D}{\mathcal{D}(\Hil)}
\newcommand{\UH}{\mathcal{U}(\Hil)}

\newcommand{\dx}{\,\mathrm{d}}


\begin{document}
\title[Some remarks about flows of Hilbert-Schmidt operators]{Some remarks about flows of\\ Hilbert-Schmidt operators}
\author{B. Boutin}
\author{N. Raymond}

\begin{abstract}
This paper deals with bracket flows of Hilbert-Schmidt operators. We establish elementary convergence results for such flows and discuss some of their consequences.
\end{abstract}

\address{IRMAR, Universit\'e de Rennes 1, Campus de Beaulieu, F-35042 Rennes cedex, France}

\email{benjamin.boutin@univ-rennes1.fr}
\email{nicolas.raymond@univ-rennes1.fr}

\maketitle


\section{Introduction and results}

\subsection{Context}

Consider a real separable Hilbert space $\Hil$ equipped with the Hilbertian inner product $\langle\cdot,\cdot\rangle$ and the associated norm $\|\cdot\|$. The set of bounded operators from $\Hil$ to $\Hil$ will be denoted by $\mathcal{L}(\Hil)$ and the associated norm by $\|\cdot\|$. Let $(e_n)_{n\in\N}$ be a Hilbertian basis of $\Hil$ and denote by $\D$ the subset of bounded diagonal operators with respect to that basis:
\[
T\in\D \Leftrightarrow \forall n\in\N,\ \exists \lambda_n\in\R,\ Te_n = \lambda_n e_n.
\]
The purpose of this paper is to analyze the structural and convergence properties of some flows of Hilbert-Schmidt operators for which $\D$ turns out to be an attractive Hilbert-Schmidt subspace. We obtain an alternative convergence proof that applies for example to the flows (in infinite dimension) of Brockett, Toda and Wegner. The present paper is mainly motivated by the work by Bach and Bru \cite{BB10} where they tackle in particular the example of the Brockett flow in infinite dimension. We extend one of their results to a wider class of flows of Hilbert-Schmidt operators. We may notice here that such flows of operators have deep applications in mathematical physics. For instance, as recently proved by Bach and Bru in \cite{BB15}, they can be used to diagonalize unbounded operators in boson quantum field theory. 

This paper also aims at being a modest review of the methods and insights appearing in finite dimension with the idea to extend them in infinite dimension. These methods are explained, for instance, in the seminal papers \cite{CN88} and \cite{Bro89} (see also \cite{Bro91}, and \cite{W94} for the Wegner flow), but also in the nice survey \cite{Chu08}. Their relations with numerical algorithms such as the QR or LU decompositions are very deep and the idea of interpreting discrete algorithms as samplings of continuous flows has now a long story (see, for instance, \cite{R54a, R54b} and \cite{WL90}). Commenting this idea would lead us far beyond the scope of this paper, but we will briefly discuss it when describing our numerical illustrations.

\subsection{Results}
Let us now describe the results of this paper. For that purpose, we introduce some general notations.
\begin{notation}
Below are the notations that we will constantly use in the paper.
\begin{enumerate}[\rm (a)]
\item We denote $\HS$ the set of the Hilbert-Schmidt operators on $\Hil$ and the associated norm is $\|\cdot\|_{\mathsf{HS}}$. In other words, $H\in\HS$ means that, for any Hilbertian basis $(e_{n})_{n\in\mathbb{N}}$, we have
\[\sum_{n\geq 0}\|He_{n}\|^2<+\infty\,,\]
and this last quantity is independent from the Hilbert basis and is denoted by $\|H\|^2_{\mathsf{HS}}$ and we have $\|H\|\leq\|H\|_{\mathsf{HS}}$. We also recall that the Hilbert-Schmidt operators are compact and that, if $(\lambda_{n}(H))_{n\in\mathbb{N}}$ is the non-increasing sequence of the eigenvalues of $H$, we have
\[\|H\|_{\mathsf{HS}}^2=\sum_{n=0}^{+\infty}|\lambda_{n}(H)|^2\,.\]
\item We will denote by $\SH$, $\AH$ and $\mathcal{U}(\Hil)$ the sets of the bounded symmetric, bounded skew-symmetric and unitary operators respectively. We also introduce $\mathcal{S}_{2}(\Hil)=\mathcal{S}(\Hil)\cap \mathcal{L}_{2}(\Hil)$ and $\mathcal{A}_{2}(\Hil)=\mathcal{A}(\Hil)\cap \mathcal{L}_{2}(\Hil)$. They are Hilbert spaces equipped with the scalar product $(A,B)\mapsto\mathsf{Tr}(A^\star B)$.

\item In the sequel we will use a fixed Hilbert basis $(e_n)_{n\in\N}$ and for any $H\in\HS$ and any integers $i,j\in\N$, we will denote $h_{i,j}(t)=\langle H(t)e_i,e_j \rangle$.
\end{enumerate}

\end{notation}

\subsubsection{An elementary convergence result}
We will consider infinite systems determined by matrices that are balanced in the sense of the following definition.
\begin{definition}
A family of real-valued functions $(g_{i,j})_{i,j\in\N}$ defined on $\R$ is said to be \emph{balanced} if
\[
\forall i,j\in\N,\ \exists c_{i,j}\in L^\infty(\R,\R_+),\ \forall t\in\R,\ |g_{i,j}(t)| \leq c_{i,j}(t) |g_{j,i}(t)|\,.
\]
It is said to be \emph{pointwise bounded} if, for all $t\in\R$, $\displaystyle{\sup_{i,j} |g_{i,j}(t)|<+\infty}$.
\end{definition}

Let us now state one of the main results of this paper. It is a general convergence result for infinite systems of differential equations. As we will see, many different classical situations may be reduced to this result.
\begin{theorem}\label{gen-theo}
Consider $H\in\mathcal{C}^1(\R,\HS)$ such that
\begin{enumerate}[\rm i)]
\item the function $t\mapsto \|H(t)\|_{\mathsf{HS}}$ is bounded,
\item the family $(h_{i,j})_{i,j\in\N}$ is balanced,
\item there exists a pointwise bounded and balanced family of measurable functions $(g_{i,j})_{i,j\in\N}$ defined on $\R$ such that
\begin{equation}
\label{eq:ODEdiago}
\forall t\in\R,\ h_{i,i}'(t)=\sum_{j=0}^{+\infty}g_{i,j}(t)|h_{i,j}(t)|^2\,.
\end{equation}
\end{enumerate}
Suppose that there exists $T>0$ and a sign sequence $(\epsilon_\ell)_{\ell\in\N}\in\{-1,1\}^{\N}$, such that
\begin{equation}
\label{eq:assumpSign}
\forall t\geq T,\ \forall k,\ell\in\N,\ k\geq \ell,\ \epsilon_\ell g_{\ell,k}(t)\geq 0\,.
\end{equation}
Then we have the integrability property
\begin{equation}
\forall \ell\in\N,\ \sum_{j=0}^{+\infty}\int_T^{+\infty} |g_{\ell,j}(t)| |h_{\ell,j}(t)|^2\dx t<+\infty\,,
\end{equation}
and any diagonal term $h_{\ell,\ell}(t)$ converges at infinity to a real value denoted by $h_{\ell,\ell}(\infty)$.

Under the additional semi-uniform lower bound condition
\begin{equation}
\label{eq:assumpBound}
\forall \ell\in\N,\ \exists c_\ell>0,\ \forall j\neq \ell,\ \forall t\geq T,\ |g_{\ell,j}(t)|\geq c_\ell\,,
\end{equation}
we have
\begin{equation}\label{eq.integrability}
\sum_{j\neq \ell}\int_T^{+\infty} |h_{\ell,j}(t)|^2\,dt = \int_T^{+\infty} \|H(t)e_\ell - h_{\ell,\ell}(t)e_\ell\|^2\dx t <+\infty\,.
\end{equation}
Suppose moreover that, for any $\ell\in\N$, the function $t\mapsto \|H'(t)e_\ell\|$ is bounded, then for any $\ell\in\N$, $H(t)e_\ell$ converges to $h_{\ell,\ell}(\infty) e_\ell$.
\end{theorem}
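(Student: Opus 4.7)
The strategy is to build a Liapunov-type quantity from the diagonal entries and exploit the sign assumption \eqref{eq:assumpSign} together with the balance hypotheses. For each $\ell\in\N$ and $t\ge T$, \eqref{eq:assumpSign} yields $\epsilon_\ell g_{\ell,j}(t)=|g_{\ell,j}(t)|$ whenever $j\ge\ell$, so \eqref{eq:ODEdiago} rewrites as
\[
\epsilon_\ell h'_{\ell,\ell}(t)=\sum_{j\ge\ell}|g_{\ell,j}(t)||h_{\ell,j}(t)|^2+\sum_{j<\ell}\epsilon_\ell g_{\ell,j}(t)|h_{\ell,j}(t)|^2,
\]
where the first piece is nonnegative on $[T,+\infty)$ while the second is a \emph{finite} sum of sign-uncertain terms to be controlled inductively.

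I would then prove the first integrability property by induction on $\ell$. For $\ell=0$ the finite sum is empty, so $\epsilon_0 h_{0,0}$ is nondecreasing on $[T,+\infty)$ and bounded by $\|H\|_{\mathsf{HS}}$, hence converges; integrating and invoking Tonelli yields $\sum_j\int_T^{+\infty}|g_{0,j}||h_{0,j}|^2\dx t<+\infty$. For the inductive step, the balance of $(g_{i,j})$ and $(h_{i,j})$ supplies, for each $j<\ell$, a constant $C_{\ell,j}<+\infty$ (a product of $L^\infty$-norms of balance functions) such that $|g_{\ell,j}||h_{\ell,j}|^2\le C_{\ell,j}|g_{j,\ell}||h_{j,\ell}|^2$, and the induction hypothesis applied at index $j$ makes each such term integrable on $[T,+\infty)$. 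Since the bad finite sum is thus in $L^1$ while $\epsilon_\ell h_{\ell,\ell}$ stays bounded, integrating the displayed identity on $[T,T']$ and letting $T'\to+\infty$ forces $\int_T^{+\infty}\sum_{j\ge\ell}|g_{\ell,j}||h_{\ell,j}|^2\dx t<+\infty$, and Tonelli plus the bound on the $j<\ell$ part closes the induction.

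The remaining conclusions then follow quickly. By \eqref{eq:ODEdiago} and the first integrability, $h'_{\ell,\ell}\in L^1(T,+\infty)$, so $h_{\ell,\ell}(t)$ converges to some $h_{\ell,\ell}(\infty)$. Under \eqref{eq:assumpBound}, the inequality $|h_{\ell,j}|^2\le c_\ell^{-1}|g_{\ell,j}||h_{\ell,j}|^2$ for $j\ne\ell$ delivers \eqref{eq.integrability}. For the last assertion, set $\phi(t):=\|H(t)e_\ell-h_{\ell,\ell}(t)e_\ell\|^2=\sum_{j\ne\ell}|h_{\ell,j}(t)|^2$; then $\phi\in L^1(T,+\infty)$, $\phi$ is bounded by $4\|H\|_{\mathsf{HS}}^2$, and expanding
\[
\phi'(t)=2\langle H(t)e_\ell-h_{\ell,\ell}(t)e_\ell,\,H'(t)e_\ell-h'_{\ell,\ell}(t)e_\ell\rangle
\]
together with $|h'_{\ell,\ell}(t)|\le\|H'(t)e_\ell\|$ yields $|\phi'(t)|\le 4\sqrt{\phi(t)}\,\|H'(t)e_\ell\|$, which is bounded under the added hypothesis. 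Barbalat's lemma (a nonnegative uniformly continuous $L^1$ function tends to $0$) then forces $\phi(t)\to 0$, and the triangle inequality gives $H(t)e_\ell\to h_{\ell,\ell}(\infty)e_\ell$.

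The central difficulty is the inductive step: the identity for $h'_{\ell,\ell}$ mixes infinitely many sign-controlled terms with a finite number of sign-uncertain ones, and one must propagate integrability upward along the index ladder using the balance hypothesis to trade unfavorable $(\ell,j)$-contributions against favorable $(j,\ell)$-ones coming from smaller indices, being careful when passing from $\int\sum$ to $\sum\int$ via Tonelli. Once this bootstrap is achieved, the convergence of the diagonals, \eqref{eq.integrability}, and the strong convergence via Barbalat's lemma are essentially automatic.
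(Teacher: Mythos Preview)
Your proposal is correct and follows essentially the same route as the paper: an induction on $\ell$ that splits $\epsilon_\ell h'_{\ell,\ell}$ into a nonnegative tail $j\ge\ell$ and a finite sign-uncertain head $j<\ell$, controls the head via the balance hypotheses and the induction hypothesis, and then deduces integrability and convergence from boundedness of $h_{\ell,\ell}$. The paper packages the two auxiliary facts you use into short lemmas (a bounded $\mathcal{C}^1$ function with derivative $F+G$, $F\in L^1$, $G\ge 0$, has $G\in L^1$ and converges; a nonnegative Lipschitz $L^1$ function tends to $0$), the second being exactly the Barbalat-type argument you invoke for $\phi$.
\medskip
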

\begin{remark}
Theorem~\ref{gen-theo} remains true if, instead of the assumption \eqref{eq:assumpSign}, we only assume that there exists a one-to-one integer mapping function $\phi:\N\to\N$, a time $T>0$ and a sign sequence $\epsilon_\ell\in\{-1,1\}$ ($\ell\in\N$) such that
\[
\forall t\geq T,\ \forall k\in\N,\ k\notin\{\phi(i),\ 0\leq i \leq \ell-1\},\ \epsilon_\ell g_{\phi(\ell),k}(t)\geq 0\,.
\]
\end{remark}
We will apply Theorem \ref{gen-theo} to \enquote{bracket} flows. Let us state a proposition that describes the fundamental property of such flows.
\begin{proposition}\label{prop.cor1}
Let $G:\SH\to\AH$ be a locally Lipschitzian function and let us consider the following Cauchy problem
\begin{equation}
\label{eq:ODE}
H' = [H,G(H)]\, ,\qquad t\geq 0\,, \qquad H(0)=H_{0}\in\mathcal{S}_{2}(\Hil)\,.
\end{equation}
There exists a unique global solution $H\in\mathcal{C}^1([0,+\infty),\mathcal{S}_{2}(\Hil))$ unitarily equivalent to the initial data $H_{0}$. More precisely there exists $U\in\mathcal{C}^1([0,+\infty),\UH)$ such that for any $t\geq 0$ one has $H(t) = U(t)^\star H_{0} U(t)$.
\end{proposition}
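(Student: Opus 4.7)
The plan is to obtain local existence by Cauchy-Lipschitz in the Hilbert space $\mathcal{S}_2(\Hil)$, then construct the unitary conjugator $U(t)$ as the solution of a linear operator ODE driven by $G(H(t))$, and finally exploit the isospectral representation to prevent blow-up and conclude globality.

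First, I would set $F(H):=[H,G(H)]$ and check that $F$ maps $\mathcal{S}_2(\Hil)$ to itself and is locally Lipschitz in the Hilbert-Schmidt norm. Self-adjointness of $F(H)$ follows from $G(H)^\star=-G(H)$, while membership in $\HS$ uses that $\mathcal{L}_2(\Hil)$ is a two-sided ideal of $\mathcal{L}(\Hil)$ with $\|AB\|_{\mathsf{HS}}\leq\|A\|_{\mathsf{HS}}\|B\|$. The Lipschitz bound comes from the splitting
\[
F(H_1)-F(H_2)=[H_1-H_2,G(H_1)]+[H_2,G(H_1)-G(H_2)],
\]
combining the local Lipschitzianity of $G$ (in operator norm, dominated by the HS norm via $\|\cdot\|\leq\|\cdot\|_{\mathsf{HS}}$) with the local boundedness of $H\mapsto\|G(H)\|$. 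Cauchy-Lipschitz in the Banach space $\mathcal{S}_2(\Hil)$ then produces a unique maximal solution $H\in\mathcal{C}^1([0,T_{\max}),\mathcal{S}_2(\Hil))$.

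Second, given the local $H$, I would define $U$ as the unique $\mathcal{L}(\Hil)$-valued solution of the non-autonomous linear ODE $U'(t)=U(t)G(H(t))$ with $U(0)=I$; this is well-posed on every compact subinterval of $[0,T_{\max})$ since $t\mapsto G(H(t))$ is continuous and locally bounded in operator norm. Unitarity is then proved in two steps. Using $(U^\star)'=-G(H)U^\star$ one gets $(UU^\star)'=UG(H)U^\star-UG(H)U^\star=0$, so $UU^\star=I$. For the companion identity, the quantity $X:=U^\star U$ solves the linear equation $X'=[X,G(H(t))]$ with $X(0)=I$; since $X\equiv I$ is a solution and this ODE admits a unique $\mathcal{L}(\Hil)$-valued solution (its right-hand side is linear in $X$ with coefficient bounded on compact time-intervals), one deduces $U^\star U=I$, and hence $U(t)\in\UH$.

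Third, I would verify the conjugation identity by differentiating $K(t):=U(t)H(t)U(t)^\star$; thanks to $H'=[H,G(H)]$, the three terms produced by the product rule collapse,
\[
K'=UG(H)HU^\star+U[H,G(H)]U^\star-UHG(H)U^\star=0,
\]
so $K\equiv H_0$, which yields $H(t)=U(t)^\star H_0 U(t)$ on $[0,T_{\max})$ after using $U^\star U=I$. Unitary invariance of the HS norm then forces $\|H(t)\|_{\mathsf{HS}}=\|H_0\|_{\mathsf{HS}}$ throughout, excluding blow-up and giving $T_{\max}=+\infty$ via the standard continuation criterion. The main subtlety, in my view, is the passage to full unitarity of $U$ in infinite dimension: one of $UU^\star=I$ and $U^\star U=I$ is immediate, but the other requires the uniqueness-of-ODE argument above, rather than following automatically as it would in finite dimension.
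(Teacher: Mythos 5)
Your proof is correct and follows essentially the same route as the paper: local Cauchy--Lipschitz for $F(H)=[H,G(H)]$, the auxiliary linear equation $U'=UG(H)$ whose solution conjugates $H(t)$ to $H_0$, and conservation of the Hilbert--Schmidt norm to rule out finite-time blow-up. The only (harmless) differences are that you run the fixed-point argument directly in $(\mathcal{S}_2(\Hil),\|\cdot\|_{\mathsf{HS}})$, whereas the paper works in $(\mathcal{S}(\Hil),\|\cdot\|)$ and recovers $H(t)\in\mathcal{L}_2(\Hil)$ a posteriori from the conjugation identity, and that you spell out the infinite-dimensional subtlety $U^\star U=\mathrm{Id}$ (via uniqueness for $X'=[X,G(H)]$), which the paper dismisses as ``easy to check''.
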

Let us now provide two corollaries of Theorem \ref{gen-theo}.

First let us introduce two convenient definitions.
\begin{definition}
Given a Hilbertian basis $(e_{n})_{n\in\mathbb{N}}$, we introduce 
\begin{enumerate}[i.]
\item the symmetric family of symmetric Hilbert-Schmidt operators $(E_{i,j})_{(i,j)\in\mathbb{N}^2}$ defined by
\[E_{i,j}(\cdot)=\begin{cases}
        \frac{1}{\sqrt{2}}(e^*_{j}(\cdot) e_{i}+e^*_{i}(\cdot) e_{j})& \text{if $ i\neq j$}\, \\
        e^*_{i}(\cdot) e_{i}  & \text{if $i=j$ }
               \end{cases}\,.
 \]

\item the skew-symmetric family of skew-symmetric Hilbert-Schmidt operators $(E^{\pm}_{i,j})_{(i,j)\in\mathbb{N}^2}$ defined by
\[E^\pm_{i,j}(\cdot)=\frac{1}{\sqrt{2}}(e^*_{j}(\cdot) e_{i}-e^*_{i}(\cdot) e_{j})\,.\]
\end{enumerate}
\end{definition}
Since the families $(E_{i,j})_{i\leq j}$ and $(E_{i,j}^\pm)_{i< j}$ are Hilbertian bases of $\mathcal{S}_{2}(\Hil)$ and $\mathcal{A}_{2}(\Hil)$ respectively, this motivates the following definition.
\begin{definition}
We say that $G\in\mathcal{L}\left(\mathcal{S}_{2}(\Hil),\mathcal{A}_{2}(\Hil)\right)$ is diagonalizable when there exists a Hilbertian basis $(e_{n})_{n\in\mathbb{N}}$ and a skew-symmetric family $(g_{i,j})$ such that
\[G(E_{i,j})=g_{i,j}E^\pm_{i,j}\,.\]
In this case, we say that $(g_{i,j})$ is the matrix-eigenvalue of $G$. Note that the matrix-eigenvalue of $G$ is balanced and pointwise bounded ($G$ is bounded).
\end{definition}

\begin{corollary}\label{cor.cor1'}
Let $G\in\mathcal{L}\left(\mathcal{S}_{2}(\Hil),\mathcal{A}_{2}(\Hil)\right)$ be a diagonalizable map such that its matrix-eigenvalue satisfies \eqref{eq:assumpSign} and \eqref{eq:assumpBound}. Then the Cauchy problem \eqref{eq:ODE} admits a unique global solution $H\in\mathcal{C}^1([0,+\infty),\mathcal{S}_{2}(\Hil))$ which weakly converges in $\mathcal{S}_{2}(\Hil)$ to a diagonal Hilbert-Schmidt operator $H_\infty$. Moreover, if $\alpha$ is a diagonal term of $H_{\infty}$ with multiplicity $m$, then $\alpha$ is an eigenvalue of $H_{0}$ of multiplicity at least $m$.
\end{corollary}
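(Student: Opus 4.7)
The plan is to combine Proposition~\ref{prop.cor1} with Theorem~\ref{gen-theo} applied to the diagonal entries of $H(t)$, and then to upgrade the basiswise limits to weak $\mathcal{S}_{2}(\Hil)$-convergence and a spectral statement.

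First, since $G$ is linear and bounded on $\mathcal{S}_{2}(\Hil)$ (hence globally Lipschitz), Proposition~\ref{prop.cor1} provides the unique global solution $H\in\mathcal{C}^1([0,+\infty),\mathcal{S}_{2}(\Hil))$ of the form $H(t)=U(t)^\star H_{0}U(t)$; in particular $\|H(t)\|_{\mathsf{HS}}=\|H_{0}\|_{\mathsf{HS}}$, which is hypothesis~(i) of Theorem~\ref{gen-theo}. Hypothesis~(ii) holds trivially with $c_{i,j}\equiv 1$ since $H(t)\in\SH$ forces $h_{i,j}=h_{j,i}$. To get~\eqref{eq:ODEdiago}, I would expand $H(t)$ in the basis $(E_{i,j})_{i\leq j}$, use diagonalizability together with $E^{\pm}_{i,i}=0$ (which forces $G(E_{i,i})=0$) to obtain $G(H(t))=\sqrt{2}\sum_{i<j}g_{i,j}(t)h_{i,j}(t)E^{\pm}_{i,j}$, and then compute, using the skew-symmetry of $G(H)$,
\[
h_{i,i}'(t)=\langle [H,G(H)](t)e_i,e_i\rangle=2\langle G(H(t))e_i,H(t)e_i\rangle=-2\sum_{j\neq i}g_{i,j}(t)\,|h_{i,j}(t)|^2.
\]
This matches~\eqref{eq:ODEdiago} with $\tilde g_{i,j}=-2g_{i,j}$; the assumptions~\eqref{eq:assumpSign} and~\eqref{eq:assumpBound} transfer verbatim after replacing $\epsilon_\ell$ by $-\epsilon_\ell$, while balancing and pointwise boundedness are inherited from $(g_{i,j})$. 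The bound $\|H'(t)e_\ell\|\leq 2\|H(t)\|\,\|G(H(t))\|\leq 2\|G\|\,\|H_{0}\|_{\mathsf{HS}}^2$ is uniform in $t$, so Theorem~\ref{gen-theo} applies and yields $h_{\ell,\ell}(t)\to h_{\ell,\ell}(\infty)$ together with the strong convergence $H(t)e_\ell\to h_{\ell,\ell}(\infty)\,e_\ell$ for every $\ell\in\N$.

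Next I would set $H_{\infty}=\sum_{\ell}h_{\ell,\ell}(\infty)E_{\ell,\ell}$; Fatou's lemma gives $\|H_{\infty}\|_{\mathsf{HS}}^2=\sum_\ell|h_{\ell,\ell}(\infty)|^2\leq\liminf_{t}\sum_\ell|h_{\ell,\ell}(t)|^2\leq\|H_{0}\|_{\mathsf{HS}}^2$, so $H_{\infty}\in\D\cap\mathcal{S}_{2}(\Hil)$. Because $\|H(t)\|_{\mathsf{HS}}$ is bounded and $(E_{i,j})_{i\leq j}$ is a Hilbertian basis of $\mathcal{S}_{2}(\Hil)$, the weak convergence $H(t)\rightharpoonup H_{\infty}$ reduces to checking $\mathsf{Tr}(H(t)E_{i,j})\to\mathsf{Tr}(H_{\infty}E_{i,j})$ for each $i\leq j$. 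A short computation yields $\mathsf{Tr}(H(t)E_{i,i})=h_{i,i}(t)$ and $\mathsf{Tr}(H(t)E_{i,j})=\sqrt{2}\,h_{i,j}(t)$ when $i<j$: the former converge by the previous step, and for $i\neq j$ the identity $h_{i,j}(t)=\langle H(t)e_i,e_j\rangle\to 0$ follows from the strong limit $H(t)e_i\to h_{i,i}(\infty)e_i$.

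For the multiplicity statement, suppose $h_{\ell_1,\ell_1}(\infty)=\cdots=h_{\ell_m,\ell_m}(\infty)=\alpha$ for distinct $\ell_1,\dots,\ell_m$, and set $f_i(t)=U(t)e_{\ell_i}$. These are orthonormal for every $t$ and satisfy $\|(H_{0}-\alpha I)f_i(t)\|=\|(H(t)-\alpha I)e_{\ell_i}\|\to 0$. When $\alpha\neq 0$, the compactness of $H_{0}$ and a diagonal extraction give a sequence $t_n\to+\infty$ along which $H_{0}f_i(t_n)$ converges strongly for every $i$; dividing by $\alpha$ promotes this to strong convergence $f_i(t_n)\to g_i$, and the limits form an orthonormal family in $\ker(H_{0}-\alpha I)$, giving multiplicity at least $m$. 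The main obstacle is the algebraic bookkeeping in the derivation of the ODE for $h_{i,i}'$ and the careful transfer of the sign condition~\eqref{eq:assumpSign}; once these are in place the convergence follows almost directly from Theorem~\ref{gen-theo}, while the spectral claim hinges on the compactness of Hilbert--Schmidt operators, which is why the case $\alpha=0$ (where weak limits of orthonormal vectors may degenerate) requires a separate, more delicate argument.
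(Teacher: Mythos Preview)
Your argument tracks the paper's almost exactly: Proposition~\ref{prop.cor1} for global existence and the isospectral relation $H(t)=U(t)^\star H_0 U(t)$, the same diagonal ODE $h'_{i,i}=-2\sum_j g_{i,j}|h_{i,j}|^2$, the uniform bound $\|H'(t)\|\leq 2\|G\|\|H_0\|_{\mathsf{HS}}^2$, and Fatou to place $H_\infty$ in $\mathcal{S}_2(\Hil)$. For the weak convergence the paper first proves \emph{strong} convergence $H(t)x\to H_\infty x$ for every $x\in\Hil$ by truncating $x$ in the basis and using the uniform norm bound, and then notes that this implies weak $\mathcal{S}_2$-convergence; your direct check on the Hilbert basis $(E_{i,j})$ is an equivalent shortcut.

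The one substantive difference is the multiplicity claim. The paper does not pass to subsequences via compactness of $H_0$. Instead it observes that for every $\varepsilon>0$ one can choose $T$ with $\|H_0 f_j-\alpha f_j\|\leq\varepsilon$ for the orthonormal family $f_j=U(T)e_{\ell_j}$, and then invokes the spectral theorem for the self-adjoint operator $H_0$ to get $\mathrm{rank}\,\mathds{1}_{[\alpha-\varepsilon,\alpha+\varepsilon]}(H_0)\geq m$ for all $\varepsilon>0$, from which it concludes that $\alpha$ is an eigenvalue of multiplicity at least $m$. This route avoids your case split entirely. Your compactness argument is valid when $\alpha\neq 0$, but the acknowledged gap at $\alpha=0$ is real: weak limits of orthonormal sequences can be zero, so your extraction does not produce eigenvectors. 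Switching to the spectral-projection argument removes the need for any special treatment of $\alpha=0$---with the caveat that the last inference (from ``rank of every spectral projection around $\alpha$ is $\geq m$'' to ``$\dim\ker(H_0-\alpha)\geq m$'') is itself delicate when $\alpha=0$ is an accumulation point of the spectrum, a point the paper passes over as well.
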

Then we can apply our theorem to the finite dimensional case and even get an explicit convergence rate. Note that this presentation does not involves a Lyapunov function and relies on the a priori convergence of $H(t)$. This is a way to avoid, as far as possible, the considerations related to the evolution of the corresponding unitary matrices $(U(t))_{t\geq 0}$ in $\mathcal{U}(\Hil)$ (that is not compact in infinite dimension).
\begin{corollary}\label{cor.dim.finie}
Consider a finite dimensional Hilbert space $\Hil$ of dimension $d$. Under the assumptions of Corollary \ref{cor.cor1'}, the solution of \eqref{eq:ODE} converges to a diagonal operator $H_{\infty}=\diag\left(\alpha_{\ell}\right)_{0\leq\ell\leq d-1}$ in $\mathcal{L}(\Hil)$ and the $\alpha_{\ell}$ are exactly the eigenvalues (with multiplicity) of $H_{0}$. If, in addition, the eigenvalues of $H_{0}$ are simple, then there exist $T, C>0$ such that, for all $t\geq T$, we have
\[\|H(t)-H_{\infty}\|\leq Ce^{-\gamma t}\,,\]
where $\displaystyle{\gamma=\inf_{\mathcal{T}^-}|g_{i,j}(\alpha_{i}-\alpha_{j})|>0}$ with 
\[\mathcal{T}^-=\{(i,j)\in\{0,\ldots, d-1\}^2 : i< j\quad\mbox{ and }\quad g_{i,j}(\alpha_{i}-\alpha_{j})<0\}\,.\]
\end{corollary}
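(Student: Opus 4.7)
The plan proceeds in four steps. First I would use Corollary \ref{cor.cor1'} together with Proposition \ref{prop.cor1} to upgrade the weak convergence to norm convergence and to identify the diagonal entries $\alpha_\ell$ as the eigenvalues of $H_0$. Then I would derive the componentwise ODE for $h_{i,j}(t)$ with $i<j$, linearize it at $H_\infty$, rule out pairs with $g_{i,j}(\alpha_i-\alpha_j)>0$, and close with a Gronwall inequality on the off-diagonal energy, combined with direct integration of \eqref{eq:ODEdiago} for the diagonal entries.

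For the convergence part: since $\dim\Hil=d<\infty$, the spaces $\mathcal{S}_{2}(\Hil)$, $\mathcal{S}(\Hil)$ and $\mathcal{L}(\Hil)$ coincide as finite-dimensional normed vector spaces, so weak and norm convergence are equivalent. Corollary \ref{cor.cor1'} therefore yields $\|H(t)-H_\infty\|\to 0$ with $H_\infty$ diagonal. Proposition \ref{prop.cor1} gives $H(t)=U(t)^\star H_0 U(t)$, so the characteristic polynomial of $H(t)$ equals that of $H_0$ for every $t\geq 0$; passing to the limit in operator norm, the same identity holds for $H_\infty$. Since $H_\infty$ is diagonal, its diagonal entries are precisely the eigenvalues of $H_0$ counted with multiplicity.

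For the rate under simplicity, using the diagonalizability of $G$ one checks that $(G(H))_{i,j}=g_{i,j}h_{i,j}$ for $i\neq j$ (with $g_{j,i}=-g_{i,j}$ and $g_{i,i}=0$), so expanding $[H,G(H)]$ gives, for $i<j$,
\[
h_{i,j}'(t)=g_{i,j}\bigl(h_{i,i}(t)-h_{j,j}(t)\bigr)h_{i,j}(t)+\sum_{k\neq i,j}(g_{k,j}-g_{i,k})h_{i,k}(t)h_{k,j}(t).
\]
The linear coefficient converges to $\mu_{i,j}:=g_{i,j}(\alpha_i-\alpha_j)$ and the remaining sum is cubic in the off-diagonal block. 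Since the eigenvalues are simple ($\alpha_i\neq\alpha_j$) and \eqref{eq:assumpBound} gives $g_{i,j}\neq 0$, each $\mu_{i,j}$ is nonzero. A pair with $\mu_{i,j}>0$ would correspond to a strictly unstable direction of the linearization at the equilibrium $H_\infty$; because $h_{i,j}(t)\to 0$, a standard unstable-manifold argument forces $h_{i,j}(t)\equiv 0$ for $t$ large. Hence only pairs $(i,j)\in\mathcal{T}^-$ carry a nontrivial amplitude, and on those pairs $g_{i,j}(h_{i,i}(t)-h_{j,j}(t))\leq-\gamma+o(1)$ as $t\to+\infty$.

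Finally, set $F(t)=\sum_{i<j}|h_{i,j}(t)|^2$. Differentiating and substituting the ODE above yields
\[
F'(t)=2\sum_{(i,j)\in\mathcal{T}^-}g_{i,j}\bigl(h_{i,i}(t)-h_{j,j}(t)\bigr)|h_{i,j}(t)|^2+O\bigl(F(t)^{3/2}\bigr),
\]
so for $t\geq T$ large enough, $F'(t)\leq-2\gamma F(t)+O(F(t)^{3/2})$, and Gronwall (with $T$ chosen so that $F(T)$ is small) gives $F(t)\leq Ce^{-2\gamma t}$. For the diagonal, \eqref{eq:ODEdiago} implies $|h_{\ell,\ell}'(t)|\leq CF(t)$, so integrating from $t$ to $+\infty$ provides $|h_{\ell,\ell}(t)-\alpha_\ell|\leq Ce^{-2\gamma t}$. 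Summing the off-diagonal and diagonal contributions gives $\|H(t)-H_\infty\|\leq Ce^{-\gamma t}$. The main obstacle is the third step: discarding pairs with $\mu_{i,j}>0$ cannot be done by local linearization alone, it requires combining the a priori global convergence $H(t)\to H_\infty$ with the hyperbolic structure of \eqref{eq:ODE} at the diagonal equilibrium. The remaining differential inequalities are routine.
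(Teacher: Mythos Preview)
Your convergence argument (first two paragraphs) is fine and matches the paper's reasoning: in finite dimension weak and strong convergence coincide, and unitary equivalence with $H_0$ for all $t$ passes to the limit.

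For the exponential rate, your route differs from the paper's and contains a genuine gap in step~3. The claim that ``a standard unstable-manifold argument forces $h_{i,j}(t)\equiv 0$ for $t$ large'' when $\mu_{i,j}>0$ is not correct. The stable manifold theorem tells you that a trajectory converging to a hyperbolic equilibrium lies on the local stable manifold $\mathcal{W}_-$, but $\mathcal{W}_-$ is only \emph{tangent} to the stable eigenspace $E_-$; it is not equal to $\{h_{i,j}=0:(i,j)\in\mathcal{T}^+\}$ unless the dynamics happens to be linear. For the quadratic nonlinearity $F(H)=[H,G(H)]$ there is no reason for $\mathcal{W}_-$ to be flat, so the unstable coordinates $h_{i,j}$ with $\mu_{i,j}>0$ are in general nonzero along the trajectory (they are $O$ of the square of the stable coordinates, not identically zero). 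Your Gronwall computation for $F(t)$ then breaks down, because the sum $\sum_{(i,j)\in\mathcal{T}^+}\mu_{i,j}|h_{i,j}|^2$ is present with the wrong sign. A second, related issue is that in the $H$-variable the linearization $\mathsf{d}F_{H_\infty}$ has a $d$-dimensional kernel (the diagonal directions $E_{i,i}$), so the equilibrium is not hyperbolic and Theorem~\ref{thm:SM} does not apply as stated.

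The paper circumvents both problems by passing to the unitary $U(t)$ satisfying $U'=UG(U^\star H_0U)$. It first proves that $U(t)$ converges (using the a~priori convergence of $H(t)$, compactness of $\mathcal{U}(\Hil)$, and finiteness of the zeros of $\mathcal{F}$), then writes $U(t)=U_\infty\exp(A(t))$ with $A(t)\in\mathcal{A}(\Hil)$. The linearized equation $A'=G([H_\infty,A])$ is diagonalized in the basis $(E^\pm_{i,j})_{i<j}$ with eigenvalues $g_{i,j}(\alpha_i-\alpha_j)$, \emph{all nonzero} by simplicity: the center directions have disappeared because $\dim\mathcal{A}(\Hil)=d(d-1)/2$ matches the dimension of the isospectral manifold. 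Now Theorem~\ref{thm:SM} applies cleanly: since $A(t)\to 0$, the trajectory lies on $\mathcal{W}_-$ and the rate $\gamma=\inf_{\mathcal{T}^-}|g_{i,j}(\alpha_i-\alpha_j)|$ follows directly. Your approach could be repaired by restricting the $H$-flow to the isospectral manifold through $H_0$ (which is transverse to the diagonal and on which the equilibrium becomes hyperbolic), but that amounts to reintroducing local coordinates equivalent to $A$.
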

Note that, in Corollary \ref{cor.dim.finie}, we have not to exclude special initial conditions to get an exponential convergence (see \cite[Theorem 3]{Bro91}).

The following proposition states that, even in infinite dimension, we may find exponentially fast eigenvalues of a generic Hilbert-Schmidt operator thanks to a bracket flow. Note that, in this case, we get a stronger convergence as the one established by Bach and Bru in \cite[Theorem 7]{BB10} for the Brockett flow and that we may also consider non symmetric operators.

\begin{proposition}\label{prop.House}
Assume that $H_{0}\in\mathcal{L}_{2}(\Hil)$ (not necessarily symmetric) is diagonalizable with eigenvalues $(\lambda_{j})_{j\geq 1}$ with decreasing real parts. 
Let us also assume that we may find $P\in\mathcal{L}(\Hil)$ such that $P H_{0} P^{-1}=\Lambda$ where $\Lambda=\diag(\lambda_{j})_{j\in\mathbb{N}}$ is a diagonal operator and such that the minors of $P$, $P_{J}=(\langle Pe_{i},e_{j}\rangle)_{0\leq i,j\leq J}$ are invertible for all $J\in\mathbb{N}$.

We use the choice 
\[G(H)=H^- -(H^-)^\star\,.\]
Then, we have, for all $\ell\in\mathbb{N}$,
\[H(t)e_{\ell}-\sum_{j=\ell+1}^{+\infty} \langle H(t)e_{\ell}, e_{j}\rangle e_{j}=\lambda_{\ell}e_{\ell}+\mathcal{O}\left(e^{-t\delta_{\ell}}\right)\,,\quad \delta_{\ell}=\min_{0\leq j\leq  \ell}\Re(\lambda_{j}-\lambda_{j+1})\,.\]
\end{proposition}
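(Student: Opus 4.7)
The plan is to recognise the bracket flow as a continuous-time QR algorithm acting on $e^{tH_0}$ and then to extract the column-wise asymptotic behaviour from the spectral and LU structures of $H_0$.

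\emph{Flow as continuous QR.} The construction in Proposition \ref{prop.cor1} uses only the skew-symmetry of $G(H)$ and therefore extends to the non-symmetric setting: there exists a unitary cocycle $U(t)$ with $H(t) = U(t)^\star H_0 U(t)$ and $\dot U = U G(H)$. I would then identify $U(t)$ with the unitary factor $Q(t)$ in the QR decomposition $e^{tH_0} = Q(t) R(t)$. Indeed, differentiating this factorisation, the relation $Q^\star H_0 Q - \dot R R^{-1} = Q^\star \dot Q$ uniquely splits into a skew-symmetric part (giving $Q^\star \dot Q$) and an upper triangular part (giving $\dot R R^{-1}$); matching the strictly lower, diagonal and strictly upper parts forces $Q^\star \dot Q = H^- - (H^-)^\star = G(H)$, so $U = Q$ up to initial conditions.

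\emph{LU decomposition of $P$ and perturbed exponential.} The invertibility of every leading minor $P_J$ yields an LU decomposition $P = L_P U_P$ with $L_P$ lower-unitriangular and $U_P$ invertible upper triangular. Then
\begin{equation*}
e^{tH_0} \;=\; P^{-1}\bigl(e^{t\Lambda} L_P e^{-t\Lambda}\bigr)\, e^{t\Lambda} U_P \;=\; P^{-1}\bigl(I + \tilde L(t)\bigr)\, e^{t\Lambda} U_P,
\end{equation*}
where $\tilde L(t)$ is strictly lower triangular with entries $(\tilde L(t))_{ij} = e^{t(\lambda_i - \lambda_j)}(L_P)_{ij}$ for $i > j$. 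The decreasing-real-parts hypothesis makes each such entry exponentially small, and the $\ell$-th column of $\tilde L(t)$ decays at rate at least $\Re(\lambda_\ell - \lambda_{\ell+1})$.

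\emph{Asymptotic QR factorisation.} Write $P^{-1} = Q_P R_P$ (QR decomposition, well defined column by column by Gram--Schmidt under the minors hypothesis). Then
\begin{equation*}
e^{tH_0} \;=\; Q_P\bigl(I + F(t)\bigr)\, R_P\, e^{t\Lambda} U_P, \qquad F(t) := R_P \tilde L(t) R_P^{-1}.
\end{equation*}
For large $t$, $F(t) \to 0$ exponentially, and the mixing by the upper-triangular $R_P^{\pm 1}$ shows that the $\ell$-th column of $F(t)$ decays at rate at least $\delta_\ell = \min_{0 \leq j \leq \ell} \Re(\lambda_j - \lambda_{j+1})$. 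Writing $I + F(t) = Q_F(t) R_F(t)$ by perturbative Gram--Schmidt (a Neumann-type expansion), and absorbing a diagonal phase $D(t)$ needed to normalise $R(t)$, one obtains $U(t) = Q_P Q_F(t) D(t)$ with $Q_F(t) \to I$ columnwise at rate $\delta_\ell$.

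\emph{Conclusion on column $\ell$.} From $P = R_P^{-1} Q_P^\star$ one computes $Q_P^\star H_0 Q_P = R_P \Lambda R_P^{-1}$, which is upper triangular with diagonal $(\lambda_j)_j$. Substituting back,
\begin{equation*}
H(t) \;=\; D(t)^\star Q_F(t)^\star \bigl(R_P \Lambda R_P^{-1}\bigr) Q_F(t) D(t),
\end{equation*}
so applying this to $e_\ell$, isolating the appropriate triangular part of the $\ell$-th column and using the columnwise rate for $Q_F(t) - I$, one obtains the announced asymptotic expansion with error $O(e^{-t\delta_\ell})$. The main obstacle is the rigorous column-by-column QR factorisation of $I + F(t)$ in infinite dimension and the clean propagation of the rate $\delta_\ell$ through the triangular products; the key ingredient is precisely that $P^{-1}$ admits Gram--Schmidt orthonormalisation column by column, which is exactly the content of the hypothesis on the minors of $P$.
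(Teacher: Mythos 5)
Your first step is exactly the paper's starting point: the identification of the flow's unitary cocycle with the orthogonal factor of $e^{tH_0}=g_1(t)g_2(t)$, $g_1$ unitary and $g_2$ triangular, is the content of Lemma~\ref{lemma.ChuNorris} (Chu--Norris), which the paper obtains by solving the two auxiliary ODEs for $g_1,g_2$ rather than by differentiating a QR factorisation; the two derivations are equivalent. Where you genuinely diverge is in extracting the asymptotics. You run the textbook QR-convergence argument: LU-factor $P=L_PU_P$ (this is where the leading-minors hypothesis enters for you), write $e^{tH_0}=Q_P(I+F(t))R_Pe^{t\Lambda}U_P$ with $F(t)=R_P\tilde L(t)R_P^{-1}$ decaying columnwise at rate $\delta_\ell$, and re-orthonormalise $I+F(t)$ perturbatively. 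The paper instead never forms any global triangular factorisation of $P$ or $P^{-1}$: it truncates to the finite sections $P_J$ and $A_J$, shows $A_J$ is invertible for large $t$ with $\|A_J^{-1}\|=\mathcal{O}(e^{-t\Re\lambda_J})$, obtains the near-invariance of $\mathrm{span}(e_0,\dots,e_J)$ under $H(t)$ up to $\mathcal{O}(e^{t\Re(\lambda_{J+1}-\lambda_J)})$, and recovers $\lambda_\ell$ and the rate $\delta_\ell$ by induction on $J$. The finite-section route is precisely what makes the argument tolerable in infinite dimension.

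The obstacle you flag at the end is therefore not a formality but the real gap in your route: in infinite dimension the invertibility of all the minors $P_J$ does not give you an LU decomposition of $P$ with \emph{bounded} factors, nor a bounded upper-triangular $R_P$ with bounded inverse in $P^{-1}=Q_PR_P$, so the operator $F(t)=R_P\tilde L(t)R_P^{-1}$ and the claimed columnwise decay of its $\ell$-th column are not justified as stated (the conjugation by $R_P^{\pm1}$ mixes infinitely many entries whose sizes you do not control). To repair this you would have to localise every one of these factorisations to the leading $J\times J$ blocks --- at which point you essentially reproduce the paper's $A_J$, $P_J$ computation and its induction on $J$. In finite dimension your argument is complete and standard; as a proof of the proposition as stated it needs that truncation step. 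One further small point to watch: your final formula exhibits the limit of $H(t)$ as a conjugate of the upper-triangular $R_P\Lambda R_P^{-1}$, whereas the proposition only controls the components $\langle H(t)e_\ell,e_j\rangle$ with $j\le\ell$; make sure your triangularity conventions for $H^-$, for $g_2$, and for $R_P$ are consistent, since a QL/QR mismatch here silently proves the transposed statement.
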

Note in particular that, in the symmetric case, $H(t)$ converges to a diagonal operator $H_{\infty}$ that has exactly the same eigenvalues as $H_{0}$ (even in infinite dimension). This property is slightly stronger than the last one stated in Corollary \ref{cor.cor1'}: for the Toda flow, in infinite dimension, no eigenvalue is lost in the limit.

\subsubsection{Organization of the paper}
Section \ref{sec.existence} is devoted to the proof of Theorem \ref{gen-theo}, Proposition \ref{prop.cor1} and Corollary \ref{cor.cor1'}. We also provide examples of applications of our results (to the Brockett, Toda and Wegner flows) in infinite dimension. In Section \ref{sec.finite.dim}, we discuss the case of finite dimension to the light of our results and discuss the exponential convergence of the flows (Corollary \ref{cor.dim.finie} and Proposition \ref{prop.House}).

\section{Existence and global convergence results}\label{sec.existence}

\subsection{Proof of the main theorem}\label{sec.proof.main}
This section is devoted to the proof of Theorem~\ref{gen-theo}. We start by stating two elementary lemmas.
\begin{lemma}\label{L1}
Consider $h\in\mathcal{C}^1(\R_+)$ a real-valued bounded function, $F\in L^1(\R_+)$ and $G$ a nonnegative measurable function defined on $\R_+$ such that
\[
\forall t\geq 0,\ h'(t) = F(t) + G(t).
\]
Then, the function $G$ is integrable and $h$ converges to a finite limit at infinity.
\end{lemma}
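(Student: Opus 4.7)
The plan is to integrate the identity $h' = F + G$ on $[0,t]$ and to extract the integrability of $G$ as a free consequence of the sign condition $G\geq 0$ combined with the boundedness of $h$. Since $h\in\mathcal{C}^1(\R_+)$, the fundamental theorem of calculus yields, for every $t\geq 0$,
\[
h(t) = h(0) + \int_0^t F(s)\dx s + \int_0^t G(s)\dx s.
\]

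First I will treat integrability. Since $F\in L^1(\R_+)$, the map $t\mapsto \int_0^t F(s)\dx s$ is bounded and in fact has a finite limit at infinity. Combined with the boundedness of $h$, this forces $\Phi(t):=\int_0^t G(s)\dx s$ to be bounded above. As $G\geq 0$, $\Phi$ is non-decreasing, hence converges to a finite limit. This is exactly the statement $G\in L^1(\R_+)$. Returning to the same identity, both $\int_0^t F$ and $\int_0^t G$ now have finite limits as $t\to +\infty$, so $h(t)$ itself converges to a finite real number.

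There is really no obstacle to speak of; the only subtlety is the order of the argument. One cannot directly conclude convergence of $h$ without first extracting integrability of $G$, and this extraction uses the boundedness of $h$ in an essential way. No refined tool (Fatou, dominated convergence, or a Cauchy criterion on $h'$) is required, since the monotonicity of $\Phi$ automatically turns boundedness into convergence.
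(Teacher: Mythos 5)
Your argument is correct and is essentially the paper's own proof: both bound $\int_0^t G$ using the boundedness of $h$ and the integrability of $F$, deduce $G\in L^1(\R_+)$ from the monotonicity of $t\mapsto\int_0^t G$, and then read off the convergence of $h$ from the integrated identity. No differences worth noting.
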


\begin{proof}
Observe that, due to the sign of $G$, one gets for all $x\geq 0$
\[
\int_0^x |G(t)|\dx t = \int_0^x G(t)\, dt = h(x)-h(0) - \int_0^x F(t)\dx t \leq 2 \sup_{\R_+}|h| + \int_0^{+\infty} |F(t)| \dx t \,.
\]
Therefore $G$ is integrable and, for all $x\geq 0$
\[
h(x) = h(0) + \int_0^{x} (F(t) + G(t))\dx t\, ,
\]
which converges as $x$ tend to infinity.
\end{proof}

\begin{lemma}\label{L1+Lip}
Let $w$ be a function defined on $\R_+$ with nonnegative values. Suppose that $w$ is Lipschitz continuous with Lipschitz constant $C>0$. Then, for any $x\geq 0$, one has
\[
w(x)^2 \leq 2C \int_x^{+\infty} w(y)\dx y \leq +\infty.
\]
Suppose moreover that $w$ is integrable, then $\displaystyle{\lim_{x\to +\infty}w(x) = 0}$.
\end{lemma}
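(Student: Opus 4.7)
The plan is to exploit the Lipschitz bound together with nonnegativity to get a geometric (triangular) lower bound on the tail integral of $w$ in terms of $w(x)$ itself.

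First I would fix $x\geq 0$ and, using $C$-Lipschitz continuity, write for every $y\geq x$ the one-sided estimate $w(y)\geq w(x)-C(y-x)$. Since $w$ is nonnegative, I may replace this by $w(y)\geq \max\bigl(0,\,w(x)-C(y-x)\bigr)$, which is supported on $[x,\,x+w(x)/C]$. Integrating this triangular lower bound then gives
\[
\int_x^{+\infty} w(y)\dx y \;\geq\; \int_x^{x+w(x)/C}\bigl(w(x)-C(y-x)\bigr)\dx y \;=\; \frac{w(x)^2}{2C},
\]
which is exactly the desired inequality $w(x)^2 \leq 2C \int_x^{+\infty} w(y)\dx y$ (the right-hand side possibly being $+\infty$, as the statement allows).

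For the second assertion, I assume in addition that $w\in L^1(\R_+)$. Then the tail integral $x\mapsto \int_x^{+\infty} w(y)\dx y$ tends to $0$ as $x\to+\infty$ by the absolute continuity of the Lebesgue integral. Combined with the first inequality this gives $w(x)^2\to 0$, hence $w(x)\to 0$ at infinity, as desired.

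There is no real obstacle here: the whole lemma reduces to the observation that a $C$-Lipschitz nonnegative function cannot decrease from the value $w(x)$ to zero in less than $w(x)/C$ units of time, so its graph sits above a triangle of base $w(x)/C$ and height $w(x)$, whose area is $w(x)^2/(2C)$.
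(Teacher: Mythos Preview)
Your proof is correct and follows essentially the same approach as the paper: both use the Lipschitz inequality $w(y)\geq w(x)-C(y-x)$ and integrate over the interval $[x,\,x+w(x)/C]$ to obtain the triangular lower bound $\frac{w(x)^2}{2C}$ for the tail integral. Your treatment of the second assertion (via the vanishing of the tail integral) is slightly more explicit than the paper's, which simply writes ``and the conclusion follows''.
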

\begin{proof}
Since $w$ is Lipschitzian, we have, for all $x, y\in\R$,
\[w(x)\leq C|x-y|+w(y)\,,\]
so that, integrating with respect to $y$ between $x$ and $x+\frac{w(x)}{C}$, we get
\[\frac{w(x)^2}{C}\leq \frac{w(x)^2}{2C}+\int_{x}^{x+\frac{w(x)}{C}}w(y)\dx y\,,\]
and the conclusion follows.
\end{proof}

We can now complete the proof of Theorem~\ref{gen-theo}.

\subsubsection{Convergence}
Consider first Equation \eqref{eq:ODEdiago} with $i=0$. Thanks to the sign property \eqref{eq:assumpSign} we have
\[
\epsilon_0 h'_{0,0}(t) = \sum_{j\in\N} |g_{0,j}(t)| |h_{0,j}(t)|^2\, .
\]
The boundedness of $t\mapsto\|H(t)\|_{\mathsf{HS}}$ implies the boundedness of $t\mapsto h_{0,0}(t)$ and thus Lemma~\ref{L1} implies the existence of a finite limit $h_{0,0}(\infty)$ and the integrability property
\[
\int_T^{+\infty} \sum_{j\in\N} |g_{0,j}(t)| |h_{0,j}(t)|^2\, dt < +\infty\, .
\]
Then, the rest of the proof is by induction on $\ell\in\N$. Let $\ell\geq 1$ be an integer and assume that, for any integers $0\leq k\leq \ell-1$, we have
\[
\int_T^{+\infty} \sum_{j\in\N} |g_{k,j}(t)| |h_{k,j}(t)|^2\, dt < +\infty\, .
\]
Equation \eqref{eq:ODEdiago} with $i=\ell$ and the sign assumption \eqref{eq:assumpSign} give
\[
\epsilon_\ell h'_{\ell,\ell}(t) = \sum_{k\leq \ell-1} \epsilon_\ell g_{\ell,k}(t) |h_{\ell,k}(t)|^2 + \sum_{j\geq \ell} |g_{\ell,j}(t)| |h_{\ell,j}(t)|^2\, .
\]
Again, $t\mapsto h_{\ell,\ell}(t)$ is bounded. Let us now prove the integrability of the first sum in the right hand side.
Since the families $(g_{i,j})$ and $(h_{i,j})$ are balanced, we get the following
\[
\exists C_\ell\geq 0,\ \forall t\geq T,\ \forall k\leq \ell-1,\ |g_{\ell,k}(t)||h_{\ell,k}(t)|^2 \leq C_\ell |g_{k,\ell}(t)||h_{k,\ell}(t)|^2.
\]
Therefore we have
\begin{equation}
\label{eq:partintegrability}
\begin{aligned}
\int_T^{+\infty} \sum_{k\leq \ell-1} |g_{\ell,k}(t)| |h_{\ell,k}(t)|^2\dx t & \leq C_\ell \sum_{k\leq \ell-1} \int_T^{+\infty} |g_{k,\ell}(t)||h_{k,\ell}(t)|^2\dx t\\
& \leq  C_\ell \sum_{k\leq \ell-1} \int_T^{+\infty} \sum_{j\in\N} |g_{k,j}(t)| |h_{k,j}(t)|^2\dx t < +\infty\, .
\end{aligned}
\end{equation}
Finally, Lemma~\ref{L1} applies and we get the existence of a finite limit $h_{\ell,\ell}(\infty)$ together with the integrability property:
\[
\int_T^{+\infty}  \sum_{j\geq \ell} |g_{\ell,j}(t)| |h_{\ell,j}(t)|^2\dx t < +\infty\, ,
\]
and then, thanks to \eqref{eq:partintegrability}, we find
\[
\int_T^{+\infty}  \sum_{j\in\N} |g_{\ell,j}(t)| |h_{\ell,j}(t)|^2\dx t < +\infty\, .
\]
This ends the proof of the integrability property and of the convergence of diagonal terms.
\subsubsection{Stronger convergence}
Consider now the additional assumption~\eqref{eq:assumpBound}. For any fixed $\ell\in\N$, we have
\[
\sum_{j\neq \ell}\int_T^{+\infty} |h_{\ell,j}(t)|^2\dx t \leq \dfrac{1}{c_\ell}\sum_{j\neq \ell}\int_T^{+\infty} |g_{\ell,j}(t)| |h_{\ell,j}(t)|^2\dx t<+\infty\,.
\]
For any $t\geq T$, we let 
\[w_{\ell}(t)=\sum_{j\neq \ell} |h_{\ell,j}(t)|^2\dx t =\|K_{\ell}(t)\|^2\qquad \mbox{ with }\qquad K_{\ell}(t)=H(t)e_\ell - h_{\ell,\ell}(t)e_\ell\,.\]
For $t\geq s\geq T$, the Cauchy-Schwarz inequality implies that
\[
|w_{\ell}(t)-w_{\ell}(s)|\leq \| K_{\ell}(t)-K_{\ell}(s) \| \|K_{\ell}(t)+K_{\ell}(s)\|\,.
\]
On one hand, we have
\[
\|K_{\ell}(t) + K_{\ell}(s)\| \leq 2\sup_{\sigma\geq T} \|H(\sigma)\|_{\mathsf{HS}}.
\]
On the other hand, we write
\[
\begin{aligned}
\| K_{\ell}(t)-K_{\ell}(s) \| &\leq \|H(t)e_\ell-H(s)e_\ell\| + |h_{\ell,\ell}(t)-h_{\ell,\ell}(s)| \\
& \leq \int_s^t \| H'(\sigma)e_\ell\|\, d\sigma + \int_s^t |h'_{\ell,\ell}(\sigma)|\dx\sigma\\
& \leq 2 \int_s^t \| H'(\sigma)e_\ell\|\, d\sigma \leq 2 M |t-s|\,,
\end{aligned}
\]
where $M$ is a uniform bound of $t\mapsto \|H'(t)e_\ell\|$. Finally, $w_{\ell}$ is a nonnegative and Lipschitz continuous function, which is moreover integrable thanks to \eqref{eq.integrability}. Therefore Lemma~\ref{L1+Lip} implies that $w_{\ell}(t)$ goes to $0$ as $t$ goes to $+\infty$.

\subsection{Proof of Proposition \ref{prop.cor1}}\label{sec.proof.cor1}

\subsubsection{Local existence}
Let us introduce the function $F:\mathcal{S}(\Hil)\to \mathcal{S}(\Hil)$ defined by
\begin{equation}\label{eq.F}
\forall H\in\mathcal{S}(\Hil)\,,\qquad F(H)=[H, G(H)]\,.
\end{equation}
Note indeed that, for all $H\in\mathcal{S}(\Hil)$, by using that $G$ is valued in $\mathcal{A}(\Hil)$, we have
\[F(H)^\star=(HG(H)-G(H)H)^{\star}=G(H)^\star H^\star-H^\star G(H)^\star=F(H)\,.\]
Moreover $F$ is locally Lipschitzian since $G$ is. By using the Cauchy-Lipschitz theorem in the Banach space $(\mathcal{S}(\Hil),\|\cdot\|)$ with the function $F$, we get the local existence of a solution of the Cauchy problem \eqref{eq:ODE}, on a maximal time interval $[0,T_{\max})$, with $T_{\max}>0$.

\subsubsection{Global existence}
Let us now show that $T_{\max}=+\infty$ and that, we have, for all $t\in[0,+\infty)$, $H(t)\in\HS$ and $\|H(t)\|_{\mathsf{HS}}=\|H_{0}\|_{\mathsf{HS}}$. Let us consider the following Cauchy problem:
\begin{equation}\label{eq.U}
U'=UG(H)\,,\qquad U(0)=\mathrm{Id}\,,
\end{equation}
where $H$ is the local solution of \eqref{eq:ODE} on $[0, T_{\max})$. Since the equation is linear, we know that there exists a unique solution $U$ defined on $[0, T_{\max})$. By using that $G(H(t))\in\mathcal{A}(\Hil)$, it is easy to check that, on $[0, T_{\max})$, we have $U^\star U=U U^\star=\mathrm{Id}$. Then an easy computation shows that the derivative of $t\mapsto U(t)H(t)U(t)^\star$ is zero and thus 
\[\forall t\in[0, T_{\max})\,,\qquad H(t)=U(t)^\star H_{0} U(t)\,.\]
From this, we infer that, for all $t\in[0, T_{\max})$, $H(t)\in\HS$ and $\|H(t)\|_{\mathsf{HS}}=\|H_{0}\|_{\mathsf{HS}}$. Therefore, on $[0, T_{\max})$, the function $H$ is bounded in $\mathcal{L}_{2}(\Hil)\subset\mathcal{L}(\Hil)$. Thus we get $T_{\max}=+\infty$.

\subsubsection{Linear case}
Let us now discuss the proof of Corollary \ref{cor.cor1'}. First we notice that, since $\mathcal{L}_{2}(\Hil)$ is an ideal of $\mathcal{L}(\Hil)$, the function $F$ defined in \eqref{eq.F} sends the Hilbert space $\mathcal{S}_{2}(\Hil)$ into itself. Thus, we may apply the Cauchy-Lipschitz theorem in this space and the global existence is ensured by the investigation in the previous section. Let us reformulate \eqref{eq:ODE} in the Hilbert basis $(e_{n})_{n\in\mathbb{N}}$. We easily have
\[h'_{i,i}(t)=2\langle G(H(t))e_{i}, H(t)e_{i}\rangle\,.\]
Then we notice that
\[H(t)=\sum_{k\leq \ell} h_{k,\ell}(t) E_{k,\ell}\,,\qquad G(H(t))=\sum_{k< \ell} g_{k,\ell}h_{k,\ell}(t)E^\pm_{k,\ell} \,.\]
We get 
\[H(t)e_{i}=\sum_{j=0}^{+\infty} h_{i,j}(t)e_{j}\,,\qquad  G(H(t))e_{i}=-\sum_{j=0}^\infty g_{i,j}h_{i, j}(t) e_{j}\,,\]
and thus
\[h'_{i,i}(t)=-2\sum_{j=0}^{+\infty} g_{i,j}|h_{i,j}(t)|^2\,.\]
Moreover we notice that
\[\|H'(t)\|\leq 2\|H(t)\|\|G(H(t))\|\leq 2\|G\|\|H(t)\|^2_{\mathsf{HS}}=2\|G\|\|H_{0}\|^2_{\mathsf{HS}}\,.\]

Therefore, thanks to the assumptions on $(g_{i,j})$, we may apply Theorem \ref{gen-theo}.

Then we let $\alpha_{\ell}=h_{\ell, \ell}(+\infty)$ and we notice that
\[\sum_{\ell=0}^{+\infty}\|H(t)e_{\ell}\|^2=\|H(t)\|^2_{\mathsf{HS}}=\|H_{0}\|^2_{\mathsf{HS}}\,,\]
and thus, with the Fatou lemma, we get
\[\sup_{\ell\in\mathbb{N}}|\alpha_{\ell}|^2\leq\sum_{\ell=0}^{+\infty}\alpha^2_{\ell}\leq\liminf_{t\to+\infty}\|H(t)\|^2_{\mathsf{HS}}=\|H_{0}\|^2_{\mathsf{HS}}\,.\]
We introduce the Hilbert-Schmidt operator $H_{\infty}:=\diag\left(\alpha_{\ell}\right)$ defined by
\[\forall x\in\Hil\,,\qquad H_{\infty}(x)=\sum_{\ell=0}^{+\infty} \alpha_{\ell}x_{\ell} e_{\ell}\,,\quad\mbox{ where }\quad x=\sum_{\ell=0}^{+\infty} x_{\ell} e_{\ell}\,.\]
In particular, we get that, for all $\ell\in\mathbb{N}$, $H(t)e_{\ell}$ converges to $H_{\infty}e_{\ell}$. Let us now consider $x\in\Hil$ and $\varepsilon>0$. There exists $L\in\mathbb{N}$ such that $\displaystyle{\|x-\sum_{\ell=0}^Lx_{\ell} e_{\ell}\|\leq\varepsilon}$. Since $\|H(t)\|\leq \|H(t)\|_{\mathsf{HS}}=\|H_{0}\|_{\mathsf{HS}}$ and $\|H_{\infty}\|\leq\|H_{0}\|_{\mathsf{HS}}$, we get that, for all $t\geq 0$,
\[\|(H(t)-H_{\infty})x\|\leq \|(H(t)-H_{\infty})\sum_{\ell=0}^Lx_{\ell} e_{\ell}\|+2\varepsilon\|H_{0}\|_{\mathsf{HS}}\,,\]
and, for $t$ large enough, it follows that
\[\|(H(t)-H_{\infty})x\|\leq 2\varepsilon\|H_{0}\|_{\mathsf{HS}}+\varepsilon\,.\]
Therefore $H(t)$ strongly converges to $H_{\infty}$ and thus $H(t)$ weakly converges to $H_{\infty}$ in $\mathcal{S}_{2}(\Hil)$.

Let us consider an eigenvalue $\alpha$ of $H_{\infty}$, with multiplicity $m$ and associated with the eigenvectors $e_{\ell_{1}},\ldots, e_{\ell_{m}}$. Then, for all $\varepsilon>0$, there exists $T>0$ such that, for all $j\in\{1,\ldots, m\}$,
\[\|H(T)e_{\ell_{j}}-\alpha e_{\ell_{j}}\|\leq\varepsilon\,.\]
or equivalently
\[\|H_{0}f_{j}-\alpha f_{j}\|\leq\varepsilon\,,\quad\mbox{ with }\quad f_{j}=U(T)e_{\ell_{j}}\,.\]
Therefore, the spectral theorem implies that
\[\forall\varepsilon>0\,,\quad \mathrm{range}\left(\mathds{1}_{[\alpha-\varepsilon,\alpha+\varepsilon]}(H_{0})\right)\geq m\,.\] 
This proves that $\alpha$ is an eigenvalue of $H_{0}$ with multiplicity at least $m$.

\subsection{Examples}
Let us now provide examples of applications of our convergence results.

\subsubsection{Brockett's choice} 
For a given $A=\diag(a_{\ell})\in\mathcal{D}(\Hil)$ with a non-increasing sequence $(a_{\ell})_{\ell\in\mathbb{N}}\in\R^\mathbb{N}_{+}$, we take $G_{1}(H) = [H,A]$. Since $\mathcal{L}_{2}(\Hil)$ is an ideal of $\mathcal{L}(\Hil)$, we have $G_{1} : \mathcal{S}_{2}(\Hil)\to\mathcal{A}_{2}(\Hil)$ and $\|G_{1}\|\leq 2\|A\|$. Moreover $G_{1}$ is diagonalizable and $g_{i,j}=a_{j}-a_{i}$.

\subsubsection{Toda's choice} 
If $H\in\mathcal{S}_{2}(\Hil)$, we introduce
\[H^-=\sum_{1\leq i\leq j} h_{i,j} e^*_{i} e_{j}\,\qquad \mbox{ and }\qquad G_{2}(H) = H^- - (H^-)^{\star}\,.\]
By definition, we have $G_{2} : \mathcal{S}_{2}(\Hil)\to\mathcal{A}_{2}(\Hil)$ and $\|G\|\leq 2\|H^-\|\leq 2\|H\|_{\mathsf{HS}}$. In addition, $G_{2}$ is diagonalizable with $g_{i,j}=-1$ for $i<j$.

\subsubsection{Wegner's choice} 
Let us finally discuss a non-linear example. We take $G_{3}(H) = [H,{\rm diag} (H)]$. Since $G_{3}$ sends the symmetric operators in the skew symmetric operators, we get that $G_{3}$ generates a flow that preserves the Hilbert-Schmidt norm. Thus the flow is global. Explicitly we have $H'=[H, [H,\diag H]]$. In particular, we get
\[h'_{i,i}(t)=2\sum_{j=0}^{+\infty} (h_{i,i}(t)-h_{j,j}(t))|h_{ij}(t)|^2\,.\]
If, for large times, the diagonal terms become distinct (in the sense of \eqref{eq:assumpSign} and \eqref{eq:assumpBound}), then, by Theorem \ref{gen-theo}, $H(t)$ converges to a diagonal matrix $H_{\infty}$. Actually, in this case, the convergence is exponential in finite dimension (see Section \ref{sec.conv.exp} where the linearization of the same kind of system is explained).

\section{Exponential convergence}\label{sec.finite.dim}

\subsection{A stronger convergence in finite dimension}
This section is devoted to the proof of Corollary \ref{cor.dim.finie}. 

\subsubsection{Reminder of the stable manifold theorem}
Let us recall a classical theorem.
\begin{theorem}\label{thm:SM}
Consider $F\in\mathcal{C}^1(\R^d,\R^d)$ such that $F(0) = 0$ and the differential $\mathsf{d}F_0$ is diagonalizable with no eigenvalues with zero real parts. Denote $E_-$ and $E_+$ respectively the stable and unstable subspaces of $\mathsf{d}F_0$ and $k=\mathrm{dim}(E_-)$
so that $d-k=\mathrm{dim}(E_+)$.
Let us consider the following Cauchy problem
\begin{equation}\label{eq.Cauchy}
H'(t) = F(H(t))\,,\quad t\in\R\,,\quad H(0)=H_0\,,
\end{equation}
and suppose, for any $H_0\in\R^d$, there exists a unique global solution $H\in\mathcal{C}^1(\mathbb{R},\R^d)$.

Then there exists $\mathcal{V}$ a neighborhood of $0$ in $\R^d$ and two $\mathcal{C}^1$-manifolds in $\mathcal{V}$, say $\mathcal{W}_-$ and $\mathcal{W}_+$, with respective dimensions $k$ and $d-k$, tangent at $0$ to $E_-$ and $E_+$ respectively, and such that for all $H_0\in\mathcal{V}$, the solution of \eqref{eq.Cauchy} satisfies:
\begin{enumerate}[\rm (a)]
\item $\lim_{t\to+\infty} H(t) = 0 \Leftrightarrow H_0\in \mathcal{W}_-$,
\item $\lim_{t\to-\infty} H(t) = 0 \Leftrightarrow H_0\in \mathcal{W}_+$.
\end{enumerate}
More precisely, the manifold $\mathcal{W}_-$ (stable at $+\infty$) is the graph of a $\mathcal{C}^1$-map denoted $\varphi_-$ defined over $E_-\cap\mathcal{V}$ and satisfying $\varphi_-(0)=0$ et $\mathsf{d}\varphi_-(0)=0$. The same result holds for $\mathcal{W}_+$.
\end{theorem}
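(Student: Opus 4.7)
The plan is to use the classical Lyapunov--Perron integral-equation method, which reduces the construction of $\mathcal{W}_-$ to a contraction fixed point. First I would write $F(H) = A H + N(H)$ with $A = \mathsf{d}F_0$ and $N(0)=0$, $\mathsf{d}N_0=0$, so that on a ball of radius $r$ around the origin $N$ is Lipschitz with constant $\eta(r)\to 0$ as $r\to 0$. By hypothesis $A$ is diagonalizable with no purely imaginary eigenvalues, hence $\R^d = E_-\oplus E_+$ is $A$-invariant and there exist $C>0$ and $\beta>0$ with $\|e^{tA}|_{E_-}\|\leq Ce^{-\beta t}$ for $t\geq 0$ and $\|e^{-tA}|_{E_+}\|\leq Ce^{-\beta t}$ for $t\geq 0$. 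Denote $\pi_\pm$ the associated spectral projections.

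Next I would set up the integral equation: given $\xi\in E_-$ small, look for a bounded continuous solution $H:[0,+\infty)\to\R^d$ of
$$H(t) = e^{tA}\xi + \int_0^t e^{(t-s)A}\pi_- N(H(s))\,ds - \int_t^{+\infty} e^{(t-s)A}\pi_+ N(H(s))\,ds,$$
the backward integral selecting precisely the unique component in $E_+$ that cancels the exponential blow-up on the unstable direction. On the Banach space $X_\alpha=\{H\in \mathcal{C}([0,+\infty),\R^d):\sup_t e^{\alpha t}|H(t)|<+\infty\}$ with $0<\alpha<\beta$, the right-hand side defines an operator whose Lipschitz constant is $O(\eta(r))$. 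Restricting $\xi$ to a sufficiently small ball in $E_-$ and invoking Banach's fixed point theorem yields a unique solution $H^\xi\in X_\alpha$ depending on $\xi$.

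Then I would define $\varphi_-:E_-\cap \mathcal{V}\to E_+$ by
$$\varphi_-(\xi) = \pi_+ H^\xi(0) = -\int_0^{+\infty} e^{-sA}\pi_+ N(H^\xi(s))\,ds,$$
and set $\mathcal{W}_-=\{\xi+\varphi_-(\xi):\xi\in E_-\cap\mathcal{V}\}$. The relations $\varphi_-(0)=0$ and $\mathsf{d}\varphi_-(0)=0$ follow by differentiating the fixed-point equation: since $\mathsf{d}N_0=0$, linearization at $\xi=0$ gives $H^0\equiv 0$ and $\partial_\xi H^\xi|_{\xi=0}(t)=e^{tA}|_{E_-}$, whence $\pi_+\partial_\xi H^\xi|_{\xi=0}(0)=0$. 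For (a), any solution starting on $\mathcal{W}_-$ lies in $X_\alpha$ and decays exponentially to $0$; conversely, any orbit staying in $\mathcal{V}$ forever forward and converging to $0$ must satisfy the integral equation above with $\xi = \pi_- H_0$, so by uniqueness of the fixed point $\pi_+ H_0 = \varphi_-(\xi)$. The construction of $\mathcal{W}_+$ and assertion (b) are symmetric, obtained by reversing time and swapping the roles of $E_\pm$.

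The main technical obstacle is establishing that $\varphi_-$ is genuinely $\mathcal{C}^1$, not merely Lipschitz. This requires a careful application of the uniform contraction principle with parameter, exploiting the $\mathcal{C}^1$ regularity of $N$ through a fiber-contraction argument on the derivative of the fixed point, or equivalently an implicit function theorem argument in $X_\alpha$. Everything else—local existence and uniqueness of $H^\xi$, tangency at $0$, and the dynamical characterization of the invariant manifolds—reduces to the exponential dichotomy estimates above combined with Banach's fixed-point theorem.
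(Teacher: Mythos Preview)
Your outline is a correct and complete sketch of the standard Lyapunov--Perron proof of the stable manifold theorem. Note, however, that the paper does not actually prove this statement: it is presented as a ``reminder'' of a classical theorem, and the authors merely remark that ``the above result is a classical consequence of a fixed point argument'' before moving on to use it. Your proposal is thus entirely consistent with what the paper alludes to, and in fact supplies the details the paper omits; there is nothing to compare beyond that.
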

The above result is a classical consequence of a fixed point argument and the convergence to such hyperbolic equilibrium point is exponential in the stable manifold, with a convergence rate similar to the one of the linearized Cauchy problem in the tangent space $E_-$, i.e. of order at worst $\mathcal{O}(e^{-\gamma t})$ where
\[
-\gamma = \max\left(\{\Re(\mathsf{sp}(\mathsf{d}F_0))\}\cap\R_-\right)<0\, .\]

\subsubsection{Convergence to the diagonal matrices: a naive estimate}
From Corollary \ref{cor.cor1'}, we know that $H(t)$ converges to $H_{\infty}=\diag(\alpha_{\ell})$ where the $\alpha_{\ell}$ are \textit{exactly} the eigenvalues of $H_{0}$ since $\Hil$ is of finite dimension. This convergence analysis is then reduced to a small neighborhood of $H_{\infty}$. The differential equation reads
\[H'=F(H)\,,\qquad F(H)=[H, G(H)]\,,\]
and, since $G$ and $H_{\infty}$ are diagonal in the canonical bases, we have $G(H_{\infty})=0$. We deduce that the differential of $F$ at $H_{\infty}$ is given by
\[\forall H\in\mathcal{S}(\Hil)\,,\qquad \mathsf{d}F_{H_{\infty}}(H)=[H_{\infty}, G(H)]\,.\]
Thus $\mathsf{d}F_{H_{\infty}}$ is an endomorphism of the space of symmetric matrices and it is diagonalized in the basis $E_{i,j}$ since a straightforward computation gives
\[\mathsf{d}F_{H_{\infty}}(E_{i,j})=g_{i,j}(\alpha_{i}-\alpha_{j})E_{i,j}\,.\]
Since $0$ is an eigenvalue of $\mathsf{d}F_{H_{\infty}}$ of multiplicity $d$, we cannot directly apply the stable manifold theorem. Nevertheless, we can elementarily get the following proposition.
\begin{proposition}
\label{prop.exp.rate}
Let us introduce
\[\mathcal{T}=\{(i,j)\in\{0,\ldots, d-1\}^2 : i< j\}\,,\]
and assume that 
\[-\gamma:=\displaystyle{\max_{\mathcal{T}}g_{i,j}(\alpha_{i}-\alpha_{j})}<0\,.\]
Then, for all $\varepsilon>0$, there exist $T, C>0$ such that, for all $t\geq T$,
\[\|H(t)-\diag(H(t))\|_{\mathsf{HS}}\leq Ce^{-\gamma t}\,.\]
\end{proposition}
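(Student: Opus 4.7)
The strategy is to derive a Gronwall-type differential inequality for the squared Hilbert-Schmidt norm of the off-diagonal part of $H(t)$, namely
\[
Y(t):=\|H(t)-\diag(H(t))\|_{\mathsf{HS}}^2=\sum_{i\neq j}h_{i,j}(t)^2.
\]
Since $\|H(t)\|_{\mathsf{HS}}=\|H_0\|_{\mathsf{HS}}$ along the flow (Proposition~\ref{prop.cor1}), we have $Y(t)=\|H_0\|_{\mathsf{HS}}^2-\sum_\ell h_{\ell,\ell}(t)^2$, hence $Y'(t)=-2\sum_\ell h_{\ell,\ell}(t)\,h_{\ell,\ell}'(t)$.

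Inserting the identity $h_{i,i}'(t)=-2\sum_{j}g_{i,j}\,h_{i,j}(t)^2$ already derived in the proof of Corollary~\ref{cor.cor1'} (with the convention $g_{i,i}=0$), and reorganizing the resulting double sum via the skew-symmetry $g_{j,i}=-g_{i,j}$ together with the symmetry $h_{j,i}=h_{i,j}$, one arrives at
\[
Y'(t)=4\sum_{i<j}\bigl(h_{i,i}(t)-h_{j,j}(t)\bigr)\,g_{i,j}\,h_{i,j}(t)^2.
\]

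Next I would invoke Corollary~\ref{cor.cor1'}, which in finite dimension yields the convergence $h_{\ell,\ell}(t)\to\alpha_\ell$ for every $\ell$. Because $\mathcal{T}$ is a finite set, for any $\varepsilon>0$ there is $T\geq 0$ such that, for every $(i,j)\in\mathcal{T}$ and every $t\geq T$,
\[
\bigl(h_{i,i}(t)-h_{j,j}(t)\bigr)\,g_{i,j}\leq(\alpha_i-\alpha_j)\,g_{i,j}+\varepsilon\leq-\gamma+\varepsilon.
\]
Using $\sum_{i<j}h_{i,j}(t)^2=\tfrac12 Y(t)$, substitution in the formula for $Y'$ produces the differential inequality $Y'(t)\leq-2(\gamma-\varepsilon)Y(t)$ on $[T,+\infty)$. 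Gronwall's lemma then delivers $Y(t)\leq Y(T)\,e^{-2(\gamma-\varepsilon)(t-T)}$, which, upon taking square roots and lumping $Y(T)^{1/2}e^{(\gamma-\varepsilon)T}$ into a constant $C$, gives the announced exponential bound (the admissible $\varepsilon$-loss being absorbed in the constant and/or the rate).

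The decisive step is the algebraic symmetrization that produces the factor $h_{i,i}(t)-h_{j,j}(t)$: it is what converts the spectral assumption on $g_{i,j}(\alpha_i-\alpha_j)$ into a genuinely negative effective rate for the Gronwall inequality. The remainder is a standard bootstrap, since the already-known convergence of the diagonal entries $h_{\ell,\ell}(t)$ is enough to make the prefactor $-\gamma+\varepsilon$ strictly negative for $t$ large enough, without requiring any quantitative rate on that prior convergence.
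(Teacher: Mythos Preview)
Your argument is correct and takes a genuinely simpler route than the paper's. The paper linearizes the flow at $H_\infty$, writes $\Delta'=\mathsf{d}F_{H_\infty}(\Delta)+R(H)-\diag(R(H))$, applies the Duhamel formula in the eigenbasis $(E_{i,j})_{i<j}$ of $\mathsf{d}F_{H_\infty}$, and then runs Gronwall on the resulting integral inequality. You bypass all of this by differentiating $Y(t)$ directly via the conservation of $\|H(t)\|_{\mathsf{HS}}$ and the diagonal ODE, and your symmetrization producing the factor $h_{i,i}-h_{j,j}$ is exactly the scalar shadow of the paper's linearization. The gain is brevity; the paper's approach makes the role of the eigenvalues $g_{i,j}(\alpha_i-\alpha_j)$ of $\mathsf{d}F_{H_\infty}$ more explicit, but otherwise buys nothing extra.

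One point needs correcting. Your final bound is $Ce^{-(\gamma-\varepsilon)t}$, not $Ce^{-\gamma t}$, and the remark that the $\varepsilon$-loss can be ``absorbed in the constant'' is not right: a deficit in the exponential rate cannot be pushed into a multiplicative prefactor. The paper closes this gap by a bootstrap: the first pass gives $\|\Delta(t)\|_{\mathsf{HS}}\leq De^{-(\gamma-\varepsilon)t}$; since $\|H'(t)\|_{\mathsf{HS}}\leq 2\|G\|\,\|H_0\|_{\mathsf{HS}}\,\|\Delta(t)\|_{\mathsf{HS}}$, integration yields $\|H(t)-H_\infty\|_{\mathsf{HS}}\leq D'e^{-(\gamma-\varepsilon)t}$; feeding this quantitative bound back in place of the crude $\varepsilon$ in the Gronwall step replaces the constant perturbation by an integrable one, and the clean rate $\gamma$ follows. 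Your scalar inequality supports the identical bootstrap: once $|h_{i,i}(t)-\alpha_i|\leq D'e^{-(\gamma-\varepsilon)t}$ (in fact $|h_{i,i}'|\lesssim\|\Delta\|_{\mathsf{HS}}^2$ gives an even better rate), you obtain $Y'(t)\leq\bigl(-2\gamma+C'e^{-(\gamma-\varepsilon)t}\bigr)Y(t)$, and Gronwall with this integrable perturbation delivers $Y(t)\leq Ce^{-2\gamma t}$.
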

\begin{proof}
Let us define 
\[R(H)=F(H)-F(H_{\infty})-\mathsf{d}F_{H_{\infty}}(H-H_{\infty})=[H-H_{\infty}, G(H-\diag(H))]\,,\]
and
\[\Delta(t)=H(t)-\diag(H(t))\,.\]
Let us now consider $\varepsilon>0$ and $T>0$ such that, for all $t\geq T$, we have
\[\|H(t)-H_{\infty}\|_{\mathsf{HS}}\leq \varepsilon\,,\qquad\|R(H(t))\|_{\mathsf{HS}}\leq 2\varepsilon\|G\|\|\Delta(t)\|_{\mathsf{HS}}\,.\]
For all $H\in\mathcal{S}(\Hil)$, we notice that $\diag\left(\mathsf{d}F_{H_{\infty}}(H)\right)$ is zero. Therefore we have
\[\Delta'(t)=\mathsf{d}F_{H_{\infty}}(\Delta(t))+R(H(t))-\diag\left(R(H(t))\right)\,.\]
The Duhamel formula reads, for $t\geq T$,
\[\Delta(t)=e^{(t-T)\mathsf{d}F_{H_{\infty}}} \Delta(T)+\int_{T}^t e^{(t-s)\mathsf{d}F_{H_{\infty}}}\{R(H(s))-\diag\left(R(H(s))\right)\} \dx s\,,\]
where $\Delta$ and $R(H)-\diag\left(R(H)\right)\in\textsf{Span}\,(E_{i,j})_{i<j}$.
Since $(E_{i,j})$ is orthonormalized for the $\mathsf{HS}$-norm, we deduce that
\[\|\Delta(t)\|_{\mathsf{HS}}\leq e^{-\gamma(t-T)}\|\Delta (T)\|_{\mathsf{HS}}+\int_{T}^t e^{-(t-s)\gamma}\|R(H(s))-\diag\left(R(H(s))\right)\|_{\mathsf{HS}} \dx s\,,\]
so that
\[e^{\gamma t}\|\Delta(t)\|_{\mathsf{HS}}\leq e^{\gamma T}\|\Delta (T)\|_{\mathsf{HS}}+2\varepsilon\|G\|\int_{T}^t e^{s\gamma}\|\Delta(s)\|_{\mathsf{HS}}\dx s\,.\]
From the Gronwall lemma, we get, for $t\geq T$,
\[e^{\gamma t}\|\Delta(t)\|_{\mathsf{HS}}\leq e^{\gamma T}\|\Delta (T)\|_{\mathsf{HS}}e^{2\varepsilon\|G\| (t-T)}\,,\]
and thus
\[\|\Delta(t)\|_{\mathsf{HS}}\leq e^{(-\gamma +2\varepsilon\|G\|)(t-T)}\|\Delta (T)\|_{\mathsf{HS}}\,.\]
We deduce that the same convergence holds for the diagonal terms. 

Let us now eliminate the dependence in $\varepsilon$ in the exponential. We easily have
\[H'(t)=F(H(t))=[H(t),G(H(t))]=[H(t),G(\Delta(t))]\,,\]
so that for all $t\geq T$
\[\|H'(t)\|_{\mathsf{HS}} \leq 2\|G\|\|H_0\|_{\mathsf{HS}}\|\Delta(t)\|_{\mathsf{HS}} \leq 2\|G\|\|H_0\|_{\mathsf{HS}} C e^{(-\gamma+\epsilon)t}\,.\]
It follows that
\[\|H(t)-H_{\infty}\|_{\mathsf{HS}}\leq \int_t^{+\infty} \|H'(s)\|_{\mathsf{HS}}\dx s\leq 2\|G\|\|H_0\|_{\mathsf{HS}} \dfrac{C}{-\gamma+\epsilon} e^{(-\gamma+\epsilon)t}\,.\]
We get therefore the existence of a constant $D$ that depends on $\epsilon$ and $\|H_0\|_{\mathsf{HS}}$ only such that for all $t\geq T$,
\[\|H(t)-H_{\infty}\|_{\mathsf{HS}}\leq D e^{(-\gamma+\epsilon)t}\,.\]
Coming back to the previous proof with $De^{(-\gamma+\epsilon)t}$ in place of $\epsilon$, we get then from the Gronwall lemma
\[e^{\gamma t}\|\Delta(t)\|_{\mathsf{HS}}\leq e^{\gamma T}\|\Delta (T)\|_{\mathsf{HS}}e^{2D\|G\|\int_T^te^{(-\gamma+\epsilon)s}\dx s}\,,\]
that may be bounded independently from $t\geq T$. Therefore we may omit the $\epsilon$ term in exponential convergence rate.

\end{proof}

\begin{remark}
In particular, this proposition may be used as follows. Let us consider $\varepsilon$ such that $\displaystyle{0<\varepsilon<\inf_{\mathcal{T}}|\alpha_{i}-\alpha_{j}|}$ and assume that $\|H_{0}-H_{\infty}\|_{\mathsf{HS}}<\varepsilon$ which may be insured by using the flow up to the time $T$. Then, there exists a permutation $\Pi$ such that
\[\|\tilde H_{0}-\tilde H_{\infty}\|_{\mathsf{HS}}=\mathcal{O}(\varepsilon)\,,\quad \tilde H_{0}=\Pi^\star H_{0}\Pi\,,\quad \tilde H_{\infty}=\diag(\tilde \alpha_{\ell})\,,\]
where $(\tilde \alpha_{\ell})_{\ell\in\{0,\ldots, d\}}$ denotes the sequence of the eigenvalues of $H_{0}$ in the opposite order with respect to $g_{i,j}$. Then we have
\[\|\tilde H(t)-\diag(\tilde H(t))\|_{\mathsf{HS}}\leq Ce^{-\gamma t}\,.\]
In other words, up to a permutation and as soon as we are close enough to the limit, we can insure an exponential convergence (with rate $\gamma$) to the diagonal matrices.
\end{remark}

\subsubsection{Proof of Corollary \ref{cor.dim.finie}}\label{sec.conv.exp}
It is actually more convenient to work on the side of the unitary transform $U$. We recall that, from \eqref{eq.U}, $U$ satisfies 
\[U'=UG(U^\star H_{0}U)=\mathcal{F}(U)\,.\]
Let us first notice that $U$ converges.
\begin{proposition}\label{eq.CVU}
The function $t\mapsto U(t)$ converges as $t$ tends to $+\infty$. Moreover, the limit $U_{\infty}$ belongs to the finite set
\[\left\{\Pi^\star\diag(\epsilon_{\ell}) \Pi\,,\quad \Pi\in\mathfrak{S}_{n}\,, \epsilon_{\ell}\in\{\pm 1\}\right\}\,.\]
\end{proposition}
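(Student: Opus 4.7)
The plan is to promote the exponential decay obtained in Proposition~\ref{prop.exp.rate} to an integrability statement for $U'$, deduce convergence of $U$, and then pin down the limit using the simple spectrum hypothesis.

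First I would use Proposition~\ref{prop.exp.rate} (together with the bootstrapping step at the end of its proof, which removes the small $\varepsilon$) to upgrade the bound on $\Delta(t)$ to the full estimate
\[
\|H(t)-H_\infty\|_{\mathsf{HS}}\leq Ce^{-\gamma t}\,,\qquad t\geq T\,.
\]
Since $G\in\mathcal{L}(\mathcal{S}_2(\Hil),\mathcal{A}_2(\Hil))$ is diagonalizable and $E^{\pm}_{i,i}=0$, one has $G(H_\infty)=0$, so by linearity
\[
\|G(H(t))\|_{\mathsf{HS}}=\|G(H(t)-H_\infty)\|_{\mathsf{HS}}\leq \|G\|\,Ce^{-\gamma t}\,.
\]
As $U(t)$ is unitary, this yields
\[
\|U'(t)\|_{\mathsf{HS}}=\|U(t)G(H(t))\|_{\mathsf{HS}}\leq \|G\|\,Ce^{-\gamma t}\,,
\]
which is integrable on $[T,+\infty)$. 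The Cauchy criterion therefore produces a limit $U_\infty$ for $U(t)$ as $t\to+\infty$, and the closedness of $\mathcal{U}(\Hil)$ in $\mathcal{L}(\Hil)$ ensures $U_\infty\in\mathcal{U}(\Hil)$.

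Next I would identify $U_\infty$. Passing to the limit in the identity $H(t)=U(t)^\star H_0 U(t)$ gives $U_\infty^\star H_0 U_\infty=H_\infty=\diag(\alpha_\ell)$, so every column $U_\infty e_\ell$ is a unit eigenvector of $H_0$ for the eigenvalue $\alpha_\ell$. Under the simple spectrum assumption of Corollary~\ref{cor.dim.finie} each eigenspace is one-dimensional, so $U_\infty e_\ell$ is determined up to a sign $\epsilon_\ell\in\{\pm 1\}$; the freedom in matching the indices $\ell$ with the eigenvalues of $H_0$ is encoded by a permutation $\Pi\in\mathfrak{S}_d$. This produces the finite candidate set advertised in the statement.

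The main obstacle is confined to the first step: one cannot infer convergence of $U(t)$ from the fact that $G(H(t))\to 0$ alone (that would only furnish $U'(t)\to 0$, which does not entail a limit). The point is that the exponential decay of $\|H(t)-H_\infty\|_{\mathsf{HS}}$ coming from Proposition~\ref{prop.exp.rate} is exactly what makes $U'$ integrable and thereby promotes pointwise convergence of $H$ into genuine convergence of $U$ in $\mathcal{U}(\Hil)$. Once this is secured, the description of the limit set is a direct consequence of the spectral theorem and the simple spectrum assumption.
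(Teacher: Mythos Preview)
Your argument has a genuine gap at the very first step. Proposition~\ref{prop.exp.rate} is stated under the extra hypothesis
\[
-\gamma:=\max_{(i,j)\in\mathcal{T}}g_{i,j}(\alpha_i-\alpha_j)<0\,,
\]
i.e.\ \emph{every} eigenvalue of $\mathsf{d}F_{H_\infty}$ on $\spann(E_{i,j})_{i<j}$ is negative. This is \emph{not} part of the standing hypotheses of Proposition~\ref{eq.CVU} (those of Corollary~\ref{cor.dim.finie}): there one only assumes simple spectrum of $H_0$ together with \eqref{eq:assumpSign}--\eqref{eq:assumpBound}, which ensure $g_{i,j}(\alpha_i-\alpha_j)\neq 0$ but say nothing about its sign. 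Indeed Corollary~\ref{cor.dim.finie} is formulated precisely to allow some $g_{i,j}(\alpha_i-\alpha_j)>0$, whence the restriction to $\mathcal{T}^-\subsetneq\mathcal{T}$ in the rate; the second Brockett example in the numerics, where the limit is $\diag(1,25,16,9,4)$, is exactly such a case. When positive eigenvalues of $\mathsf{d}F_{H_\infty}$ are present, the Gronwall estimate behind Proposition~\ref{prop.exp.rate} no longer yields exponential decay of $\Delta(t)$, so you cannot conclude that $\|G(H(t))\|$ is integrable, and your Cauchy-criterion argument for $U$ collapses.

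The paper circumvents this by a purely topological argument that uses only $H(t)\to H_\infty$ (hence $U'(t)=U(t)G(H(t))\to 0$), compactness of $\mathcal{U}(\Hil)$ in finite dimension, and the fact that the zero set of $\mathcal{F}(U)=UG(U^\star H_0U)$ is finite under the simple spectrum assumption. The $\omega$-limit set of $t\mapsto U(t)$ is then a non-empty connected subset of a finite set, hence a singleton. So your diagnosis that ``$U'(t)\to 0$ alone is not enough'' is correct, but the missing ingredient is not an a~priori exponential rate for $H(t)-H_\infty$; it is compactness plus discreteness of the equilibrium set. Once convergence of $U$ is secured this way, the exponential rate is obtained afterwards via the stable manifold theorem near $U_\infty$, which is how the paper eventually proves Corollary~\ref{cor.dim.finie}. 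Your identification of the limit set from the simple spectrum is fine.
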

\begin{proof}
The orthogonal group is compact since the dimension is finite. If $U$ converges to $U_{\infty}$, then $U_{\infty}$ is still unitary and $H_{\infty}=U_{\infty}^\star H_{0} U_{\infty}$. There exists an orthogonal transformation $Q_{0}$ such that $Q_{0} H_{0}Q^\star_{0}=\diag(\alpha_{\ell})$ where the $\alpha_{\ell}$ are in non-increasing order. Thus, there exists a permutation $\Pi$ such that
\[H_{\infty}=(U_{\infty}^\star Q^\star_{0}\Pi^\star) H_{\infty} (\Pi Q_{0} U_{\infty})\,,\]
Thus, since $H_{\infty}$ is diagonal with simple coefficients, $\Pi Q_{0} U_{\infty}$ is itself diagonal and thus, we get $(\Pi Q_{0} U_{\infty})^2=\mathrm{Id}$.

Now we know that $H(t)$ tends to $H_{\infty}$ so that $U'(t)$ goes to $0$ as $t$ goes to $+\infty$. Since the orthogonal group is compact and that $\mathcal{F}$ is continuous with a finite number of zeros, we deduce that $U$ must converge to a zero of $\mathcal{F}$.
\end{proof}
We can notice that the proof of Proposition \ref{eq.CVU} relies on the convergence of $H(t)$ when $t$ goes to $+\infty$. Without this a priori convergence, we would not get the convergence of $U(t)$ for any given $H_{0}$. As in \cite[Theorem 3]{Bro91}, one should exclude from the initial conditions the union of the stable manifolds of all the stationary points that are not attractive. With the compactness of $\mathcal{U}(\Hil)$, this would imply the convergence of $U(t)$ towards the (unique) attractive point. Actually such restrictions on the initial conditions are not necessary to get the convergence as explained in the following lines.

Let us now parametrize, locally near $U_{\infty}$, the orthogonal group $\mathcal{U}(\Hil)$ by its Lie algebra $\mathcal{A}(\Hil)$. By the local inversion theorem, it is well known that $\exp : \mathcal{A}(\Hil)\to \mathcal{U}(\Hil)$ is a local smooth diffeomorphism between a neighborhood of $0$ and a neighborhood of $\mathrm{Id}$. We recall that
\[(\mathsf{d}\exp (A))^{-1}(H)=\exp(-A)\sum_{k=0}^{+\infty} \frac{(\mathsf{ad}A)^k}{(k+1)!} H\,.\]
For $t\geq T$, we may write $U(t)=U_{\infty}\exp(A(t))$. Thus we have
\[U'=U_{\infty}e^{A(t)}G(e^{-A(t)}H_{\infty}e^{A(t)})\,.\]
We get
\[A'(t)=e^{A(t)} G(e^{-A(t)}H_{\infty}e^{A(t)})e^{-A(t)}+R_{1}(t)\,,\]
with $R_{1}(t)=\mathcal{O}(\|A(t)\|^2)$. We infer that
\[A'(t)= G([H_{\infty},A])+R_{2}(t)\,,\]
with $R_{2}(t)=\mathcal{O}(\|A(t)\|^2)$. Now $A\mapsto G([H_{\infty},A])$ is an endomorphism of the space of skew-symmetric matrices $\mathcal{A}(\Hil)$ and it is diagonalized in the basis $E^\pm_{i,j}$ with $i<j$ with eigenvalues $g_{i,j}(\alpha_{i}-\alpha_{j})$. By assumption these eigenvalues are not zero. We may apply the stable manifold theorem. Since $A(t)$ goes to $0$, this means, by definition, that $A(t)$ belongs to the stable manifold near $0$. Therefore it goes exponentially to zero, with a convergence rate a priori of order $\mathcal{O}(e^{-\gamma t})$, with $\displaystyle{\gamma=\inf_{\mathcal{T}^-}|g_{i,j}(\alpha_{i}-\alpha_{j})|>0}$. Of course, the convergence may be stronger, depending on the stable manifold on which $A(t)$ is evolving. Then $H(t)$ inherits this convergence rate.

\subsection{Abstract factorization of Hilbert-Schmidt operators}
Actually it is possible to get a quantitative convergence to the eigenvalues of non self-adjoint operators in infinite dimension. Note that, in Theorem \ref{gen-theo}, we do not have such a quantification, even for symmetric operators.
\subsubsection{Chu-Norris decomposition for Hilbert-Schmidt operators}
Let us consider an application $G : \mathcal{L}(\Hil)\to\mathcal{A}(\Hil)$ of class $\mathcal{C}^1$ and the following Cauchy problem:
\[H'(t)=[H(t), G(H(t))]\,,\qquad H(0)=H_{0}\in\mathcal{L}_{2}(\Hil)\,.\]
We also introduce the equations
\begin{align*}
g_{1}'(t)&=g_{1}(t)G(H(t))\,, &g_{1}(0)=\mathrm{Id}\,,\\
g'_{2}(t)&=(H(t)-G(H(t)))g_{2}(t)\,,&g_{2}(0)=\mathrm{Id}\,.
\end{align*}
Then we have the following lemma (coming from a straightforward adaptation of the theorems in \cite{CN88} that are actually true for bounded operators).
\begin{lemma}\label{lemma.ChuNorris}
The solutions $H, g_{1}$ and $g_{2}$ are global and of class $\mathcal{C}^1$. Then, we have $g_{1}\in\mathcal{C}^1(\R,\mathcal{U}(\Hil))$ and 
\begin{equation}\label{eq.g1g2}
H(t)=g_{1}(t)^{-1}H_{0}g_{1}(t)=g_{2}(t)H_{0}g_{2}^{-1}(t)
\end{equation}
and thus we have $H\in\mathcal{C}^1(\R,\mathcal{L}_{2}(\Hil))$. 

Moreover we have the relations
\begin{equation}\label{eq.g1g2'}
e^{tH_{0}}=g_{1}(t)g_{2}(t)\,,\qquad e^{tH(t)}=g_{2}(t)g_{1}(t)\,.
\end{equation}
\end{lemma}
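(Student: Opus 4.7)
The plan is to construct the three objects $H$, $g_1$, $g_2$ in sequence and to deduce all the stated algebraic identities by uniqueness for the underlying linear operator ODEs, using the unitarity of $g_1$ to turn local existence for $H$ into global existence in $\mathcal{L}_2(\Hil)$. I would begin by solving $H'=[H,G(H)]$ locally in $(\mathcal{L}(\Hil),\|\cdot\|)$ by Cauchy-Lipschitz, obtaining a $\mathcal{C}^1$ maximal solution on some interval $[0,T_{\max})$. On that same interval the linear ODE $g_1'=g_1 G(H)$, $g_1(0)=\mathrm{Id}$, has a unique $\mathcal{C}^1$-solution by linear existence theory, since the coefficient $G(H(\cdot))$ is continuous and locally bounded. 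Differentiating $g_1g_1^\star$ and $g_1^\star g_1$ and using $G(H)^\star=-G(H)$ shows both products stay equal to $\mathrm{Id}$, so $g_1(t)\in\mathcal{U}(\Hil)$.

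Next I would prove the first conjugation identity by a uniqueness trick. Setting $K(t)=g_1(t)^\star H_0 g_1(t)$, a direct computation using $(g_1^\star)'=-G(H)g_1^\star$ gives $K'=[K,G(H)]$ with $K(0)=H_0$. But $H$ also satisfies the \emph{linear} (in $X$) operator ODE $X'=[X,G(H(t))]$ with $G(H(t))$ already fixed and continuous; uniqueness forces $H=K=g_1^\star H_0 g_1$. Since $g_1$ is unitary, $\|H(t)\|_{\mathsf{HS}}=\|H_0\|_{\mathsf{HS}}$ and $\|H(t)\|\leq\|H_0\|$ on $[0,T_{\max})$; the operator-norm bound prevents blow-up, so $T_{\max}=+\infty$. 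Continuity of $t\mapsto H(t)$ with values in $\mathcal{L}_2(\Hil)$ follows from continuity of $g_1$ in operator norm together with the standard fact that two-sided multiplication by bounded operators is continuous on $\mathcal{L}_2(\Hil)$, and $H'=[H,G(H)]\in\mathcal{L}_2(\Hil)$ by the ideal property.

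For $g_2$, now that $H-G(H)$ is globally defined and locally bounded, the linear ODE $g_2'=(H-G(H))g_2$ admits a unique global $\mathcal{C}^1$-solution. Invertibility is obtained by introducing the companion problem $h_2'=-h_2(H-G(H))$, $h_2(0)=\mathrm{Id}$, and checking by direct differentiation that both $h_2g_2$ and $g_2h_2$ satisfy linear ODEs also solved by $\mathrm{Id}$, hence coincide with $\mathrm{Id}$; so $g_2^{-1}=h_2$. The same uniqueness trick applied to $L(t)=g_2(t)H_0g_2(t)^{-1}$ and to $H(t)$, both of which satisfy $X'=[H-G(H),X]$ with $X(0)=H_0$, gives $H=g_2H_0g_2^{-1}$. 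Finally, differentiating $Y=g_1g_2$ and using $g_1H=H_0g_1$ coming from the first conjugation identity yields $Y'=g_1 G(H)g_2+g_1(H-G(H))g_2=g_1Hg_2=H_0Y$ with $Y(0)=\mathrm{Id}$, hence $Y(t)=e^{tH_0}$. The second product formula then falls out from a single conjugation: $g_2g_1=g_1^{-1}(g_1g_2)g_1=g_1^{-1}e^{tH_0}g_1=e^{t\,g_1^\star H_0 g_1}=e^{tH(t)}$.

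The main subtle point is the logical order of the argument: the unitarity of $g_1$, which is what produces the operator-norm bound on $H$, is only available once $g_1$ itself has been constructed on the maximal interval of existence of $H$, so one must be careful to work throughout on $[0,T_{\max})$ and only then upgrade to $T_{\max}=+\infty$. A secondary issue specific to infinite dimension is that a one-sided inverse is not automatically a two-sided inverse, so the invertibility of $g_2$ genuinely requires verifying both $h_2g_2=\mathrm{Id}$ and $g_2h_2=\mathrm{Id}$ separately via ODE uniqueness; once this is in hand, the rest of the lemma reduces to routine differentiations and the uniqueness statements for the linear ODEs $X'=[X,G(H)]$ and $X'=[H-G(H),X]$.
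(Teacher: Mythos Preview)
Your proof is correct and follows essentially the same route as the paper's: unitarity of $g_1$ from $(g_1g_1^\star)'=0$, the conjugation identities by comparing each of $g_1^\star H_0 g_1$ and $g_2 H_0 g_2^{-1}$ with $H$ via the uniqueness of a linear operator ODE (equivalently, the paper writes $(g_1Hg_1^\star)'=0$ and $(g_2^{-1}Hg_2)'=0$), and the first exponential identity from the fact that $g_1g_2$ solves $Z'=H_0Z$, $Z(0)=\mathrm{Id}$. Your treatment is in fact more careful than the paper's sketch on the point of the two-sided invertibility of $g_2$ in infinite dimension, which the paper takes for granted.
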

\begin{proof}
Let us just give some insights of the proof. By easy calculations, we get $(g_{1}g_{1}^\star)'=0$, $(g_{1}Hg_{1}^\star)'=0$, and $(g_{2}^{-1}Hg_{2})'=0$, which ensures both the first algebraic properties and the global character of the solutions in the considered spaces. The first exponential factorization follows from the uniqueness result for the following Cauchy problem
\[Z'(t) = H_0Z(t),\quad t\in\R,\quad Z(0)=\mathrm{Id}\,,\]
of which both $e^{tH_0}$ and $g_1(t)g_2(t)$ are solutions. The second exponential factorization then easily follows as a consequence.
\end{proof}

\subsubsection{Proof of Proposition \ref{prop.House}}
It is easy to see that 
\[g_{2}(t) e_{\ell}=\sum_{j=0}^{\ell} \alpha_{\ell, j}(t)e_{j}\,.\]
Thus we may write
\[e^{t H_{0}}e_{\ell}=\sum_{j=0}^\ell \alpha_{\ell, j}(t) g_{1}(t)e_{j}\,.\]
We get
\[P^{-1}e^{t \Lambda}f_{\ell}=\sum_{j=0}^\ell \alpha_{\ell, j}(t) g_{1}(t)e_{j}\,,\qquad f_{\ell}=P e_{\ell}\,.\]
We have 
\begin{equation}\label{eq.alpha}
\sum_{j=0}^\ell \alpha_{\ell, j}(t) g_{1}(t)e_{j}=P^{-1}e^{t \Lambda}f_{\ell}=\sum_{j=0}^J p_{\ell, j} e^{t\lambda_{j}}v_{j}+\sum_{j=J+1}^{+\infty} p_{\ell, j}e^{t\lambda_{j}} v_{j}\,,
\end{equation}
where $v_{j}=P^{-1}e_{j}$. We may also write
\begin{equation}\label{eq.alpha'}
A_{J}\left[\begin{array}{c}e_{0}\\ \vdots\\ e_{J} \end{array}\right] = P_{J}e^{t\Lambda_{J}}\left[\begin{array}{c}g^\star_{1}v_{0}\\ \vdots\\ g^\star_{1}v_{J} \end{array}\right]+\mathcal{O}(e^{t\Re(\lambda_{J+1})})\,.
\end{equation}
and
\[ A_{J} = P_{J}e^{t\Lambda_{J}}\left[\begin{array}{c}g^\star_{1}v_{0}\\ \vdots\\ g^\star_{1}v_{J} \end{array}\right][e_{0}\ldots e_{J}]+\mathcal{O}(e^{t\Re(\lambda_{J+1})})\,.\]
By using the simplicity of the eigenvalues, we get, by using a Neumann series and the fact that $P_{J}$ is invertible, that $A_{J}$ is invertible when $t$ is large enough. In addition, we get
\begin{equation}\label{eq.AJ-1}
\|A_{J}^{-1}\|=\mathcal{O}\left(e^{-t\Re(\lambda_{J})}\right)\,.
\end{equation}
From \eqref{eq.alpha}, we get
\begin{equation*}
\sum_{j=0}^\ell \alpha_{\ell, j}(t) H_{0}g_{1}(t)e_{j}=\sum_{j=0}^J p_{\ell, j} \lambda_{j}e^{t\lambda_{j}}v_{j}+\mathcal{O}\left(e^{t\Re(\lambda_{J+1})}\right)\,,
\end{equation*}
so that,  with \eqref{eq.g1g2},
\begin{equation*}
\sum_{j=0}^\ell \alpha_{\ell, j}(t) g_{1}(t)H(t)e_{j}=\sum_{j=0}^J p_{\ell, j} \lambda_{j}e^{t\lambda_{j}}v_{j}+\mathcal{O}\left(e^{t\Re(\lambda_{J+1})}\right)\,,
\end{equation*}
which may be written as
\[ A_{J}\left[\begin{array}{c}H(t)e_{0}\\ \vdots\\ H(t)e_{J} \end{array}\right] = P_{J}\Lambda_{J}e^{t\Lambda_{J}}\left[\begin{array}{c}g^\star_{1}v_{0}\\ \vdots\\ g^\star_{1}v_{J} \end{array}\right]+\mathcal{O}(e^{t\Re(\lambda_{J+1})})\,.\]
or, with \eqref{eq.AJ-1},
\[ \left[\begin{array}{c}H(t)e_{0}\\ \vdots\\ H(t)e_{J} \end{array}\right] = A^{-1}_{J}P_{J}\Lambda_{J}e^{t\Lambda_{J}}\left[\begin{array}{c}g^\star_{1}v_{0}\\ \vdots\\ g^\star_{1}v_{J} \end{array}\right]+\mathcal{O}(e^{t\Re(\lambda_{J+1}-\lambda_{J})})\,.\]
From \eqref{eq.alpha'}, we get
\begin{equation}\label{eq.alpha''}
\left[\begin{array}{c}g^\star_{1}v_{0}\\ \vdots\\ g^\star_{1}v_{J} \end{array}\right]=e^{-t\Lambda_{J}}P^{-1}_{J}A_{J}\left[\begin{array}{c}e_{0}\\ \vdots\\ e_{J} \end{array}\right]+\mathcal{O}(e^{t\Re(\lambda_{J+1}-\lambda_{J})})\,,
\end{equation}
and thus
\begin{equation}\label{eq.approx-inv}
 \left[\begin{array}{c}H(t)e_{0}\\ \vdots\\ H(t)e_{J} \end{array}\right] = A^{-1}_{J}P_{J}\Lambda_{J}P^{-1}_{J}A_{J}\left[\begin{array}{c}e_{0}\\ \vdots\\ e_{J} \end{array}\right]+\mathcal{O}(e^{t\Re(\lambda_{J+1}-\lambda_{J})})\,.
\end{equation}
Therefore the space spanned by $e_{0},\ldots, e_{J}$ is invariant by $H(t)$ up to an error of  order $\mathcal{O}(e^{t\Re(\lambda_{J+1}-\lambda_{J})})$. In view of \eqref{eq.approx-inv}, we get the conclusion by induction on $J\geq 0$. In other words, we approximate the diagonal terms of $H(t)$ one by one by moving $J$ from $0$ to $+\infty$.

\subsection{Numerics}

In the following numerical illustrations, we consider the finite dimensional example of flows evolving in matrices of $\mathcal{M}_{d}(\R)$ with $d=5$. As an initial data, let fix $H_0=Q\,\diag(\ell^2)_{1\leq \ell\leq d}\,Q^\star$, where $Q=\exp(B_5)$ is a unitary matrix defined through the following choice of $B_d\in\mathcal{A}_{d}(\R)$ skew-symmetric:
\[B_d = (b_{i,j}),\quad b_{i,j}=1,\textrm{ for }i>j\,.\]
The numerical solver to compute any of the below approximate solutions is an adaptive 4th-order Runge-Kutta scheme.

\subsubsection{Brockett's choice} 
The considered Brockett's bracket flow is given by the choice $A = \diag(d-\ell)_{0\leq \ell\leq d-1}\in\mathcal{D}(\R^d)$ with non-increasing diagonal elements. With these data, we observe the convergence to the limit $H_{\infty} = \diag([25,16,9,4,1])$, see Figure~\ref{figure.brockett}. The diagonal terms are sorted in a descending order, according to the coefficients in $A$. The effective exponential rate of convergence $\gamma=3$ coincides, for sufficiently large times, with the worst expected one (Corollary~\ref{cor.dim.finie}).
The matrix of $g_{i,j}(\alpha_i-\alpha_j)$ is there:
\[\begin{pmatrix}
0  &  - 9  & - 32&  - 63 & - 96  \\
  - 9   &  0  &  - 7 &  - 24 & - 45  \\
  - 32 & - 7 &   0 &  - 5 &  - 16  \\
  - 63 & - 24 & - 5  &   0  &  - 3   \\
  - 96 & - 45 & - 16 & - 3   &  0
  \end{pmatrix}\,.\]
\begin{figure}
\includegraphics[scale=0.32]{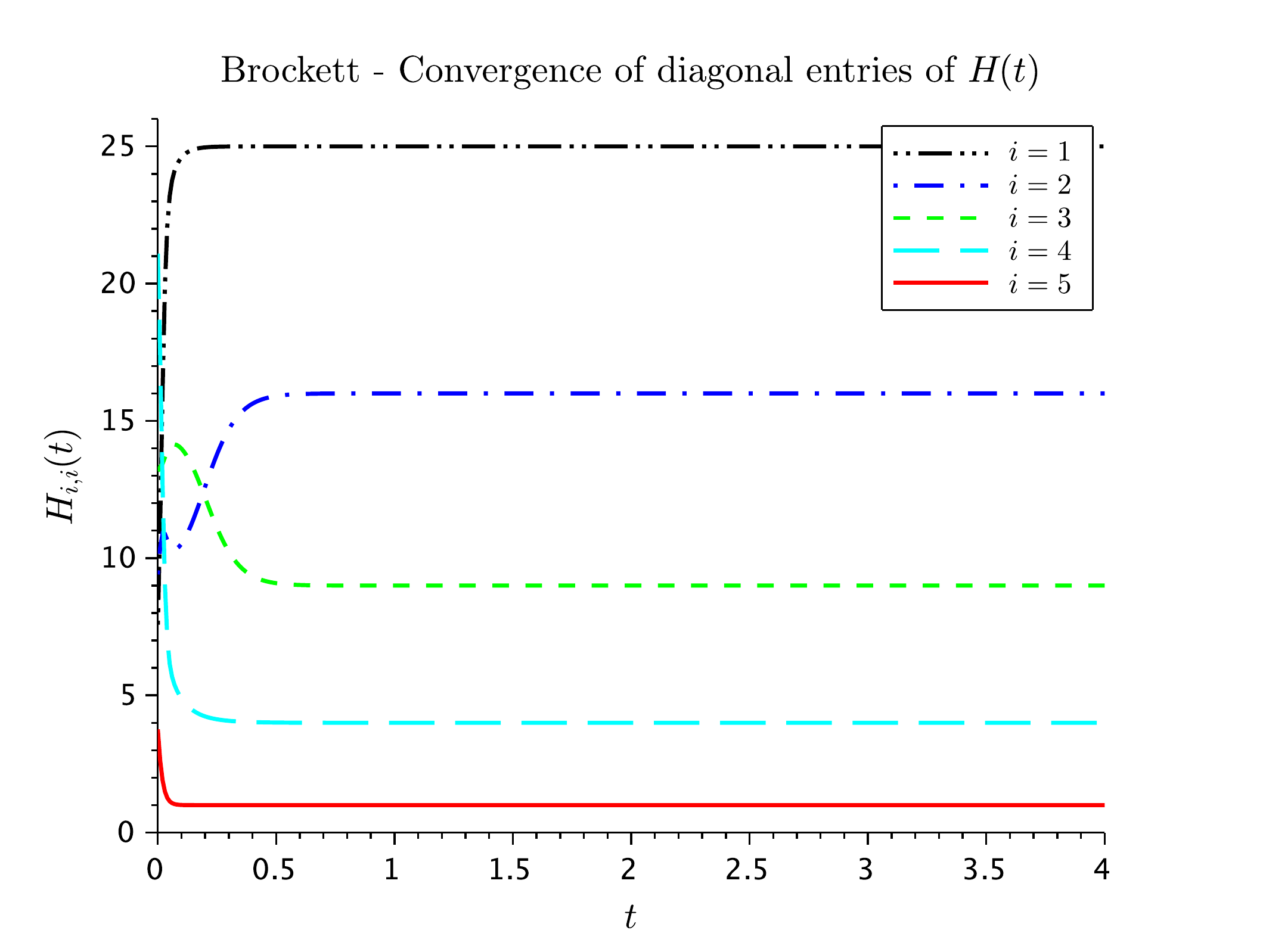}
\includegraphics[scale=0.32]{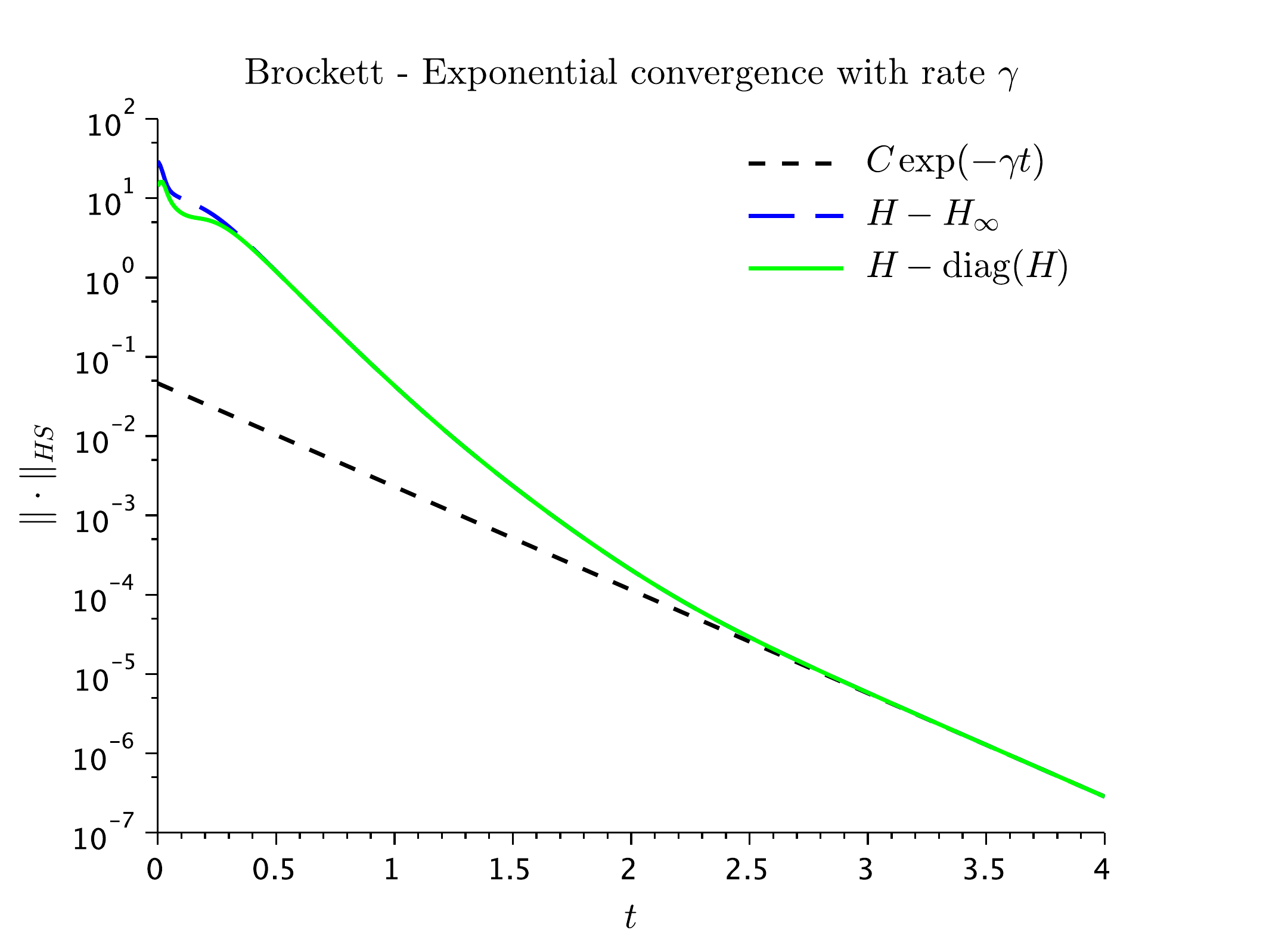}
\caption{Brockett flow -- Convergence of diagonal entries (left), exponential convergence of $H(t)$ (right).}
\label{figure.brockett}
\end{figure}

Let change the initial data to the following one $\tilde{H}_0=\tilde{Q}\,\diag(\ell^2)_{1\leq \ell\leq d}\,\tilde{Q}^\star$, where $\tilde{Q}=\begin{pmatrix}1 & (0)\\ (0) & \exp(B_4)\end{pmatrix}$. Then $\tilde{H}_0$ takes the above block form
\begin{equation}\label{form.sev}
\begin{pmatrix}1 & (0) \\ (0) & K\end{pmatrix}\,,
\end{equation}
where $K\in\mathcal{S}_{4}(\R)$ has spectrum $\{4,9,16,25\}$. Then we observe the convergence to the limit $H_{\infty} = \diag([1,25,16,9,4])$, with an exponential rate $\gamma=5$. Namely, the matrix of $g_{i,j}(\alpha_i-\alpha_j)$ is then:
\[\begin{pmatrix}
   0   &   24   & 30  &  24  &  12  \\
    24  &  0  &  - 9  & - 32&  - 63  \\
    30 & - 9   &  0  &  - 7 &  - 24  \\
    24 & - 32 & - 7&     0  &  - 5  \\
    12 & - 63 & - 24 & - 5  &   0
\end{pmatrix}\,.\]
In fact, in that case the exact flow evolves in the linear subspace described in~\eqref{form.sev} and the positive coefficients of the above matrix concern precisely the evolution of the flow transversally to that subspace. It is only because of the very specific form of the initial data that the numerical flow evolves in the same way, elsewhere numerical error due to the finite precision would have induced a convergence of the numerical solution to the totally sorted limit with rate $\gamma=3$.

\subsubsection{Toda's choice} 
For the same test case, the Toda flow gives similar results, see Figure~\ref{figure.toda}. Once again, the effective exponential rate of convergence is $\gamma=3$ and coincides, for sufficiently large times with the theoretical one.
Actually the matrix of $g_{i,j}(\alpha_i-\alpha_j)$ is there:
\[\begin{pmatrix}
    0   &  -9 &    -16  &  -21  &  -24  \\
  - 9    & 0 &    -7   &  -12  &  -15  \\
  - 16&  - 7   &  0  &   -5  &   -8   \\
  - 21&  - 12&  - 5   &  0  &   -3   \\
  - 24&  - 15&  - 8&   - 3 &    0
    \end{pmatrix}\,.\]
On Figure~\ref{figure.toda2}, we present the numerical counterpart of Proposition~\ref{prop.House}. For any $1\leq \ell\leq d-1$, we compute the norm of the residual column $\|(h_{j,\ell}-\alpha_\ell\delta_{j,\ell})_{\ell\leq j\leq d}\|$ along the time. The thick dashed curves corresponds to the numerical computations and the thin continuous ones corresponds to the expected rate of convergence, namely $\Re(\alpha_{\ell}-\alpha_{\ell+1})\in\{9,7,5,3\}$ successively, because of the precise distribution of the eigenvalues of $H_0$ in this example.
\begin{figure}
\includegraphics[scale=0.32]{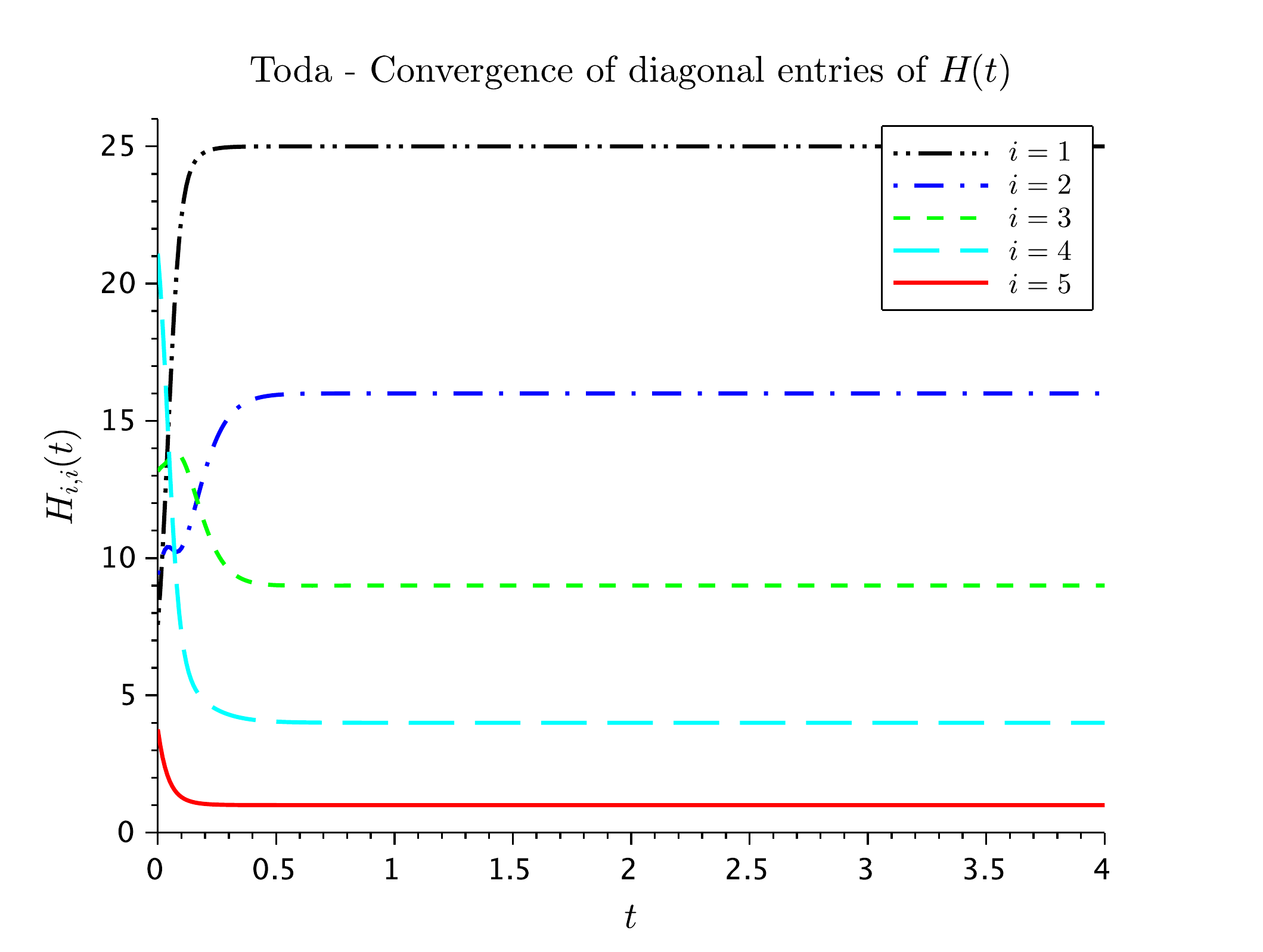}
\includegraphics[scale=0.32]{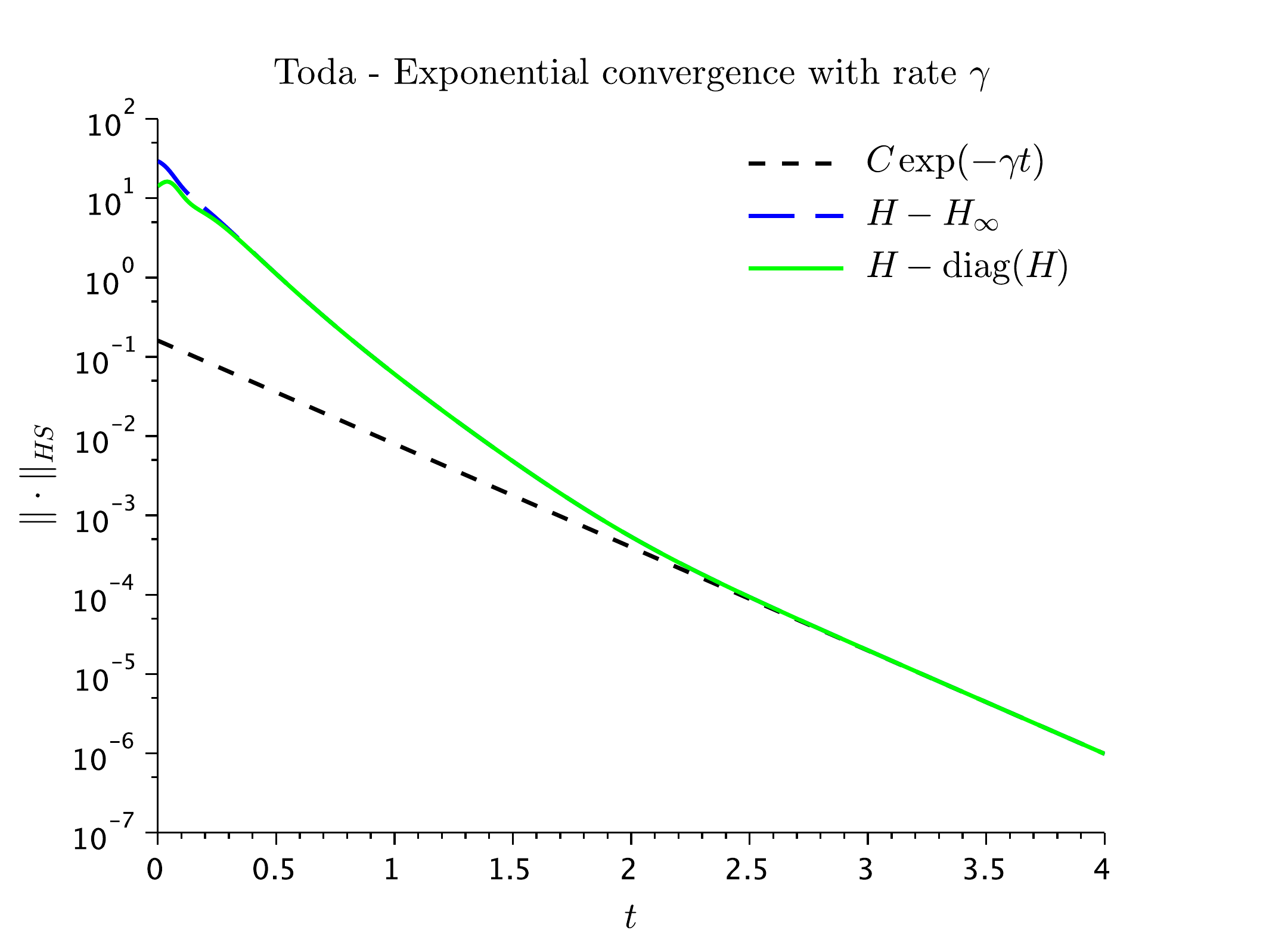}
\caption{Toda flow -- Convergence of diagonal entries (left), exponential convergence of $H(t)$ (right).}
\label{figure.toda}
\end{figure}
\begin{figure}
\includegraphics[scale=0.32]{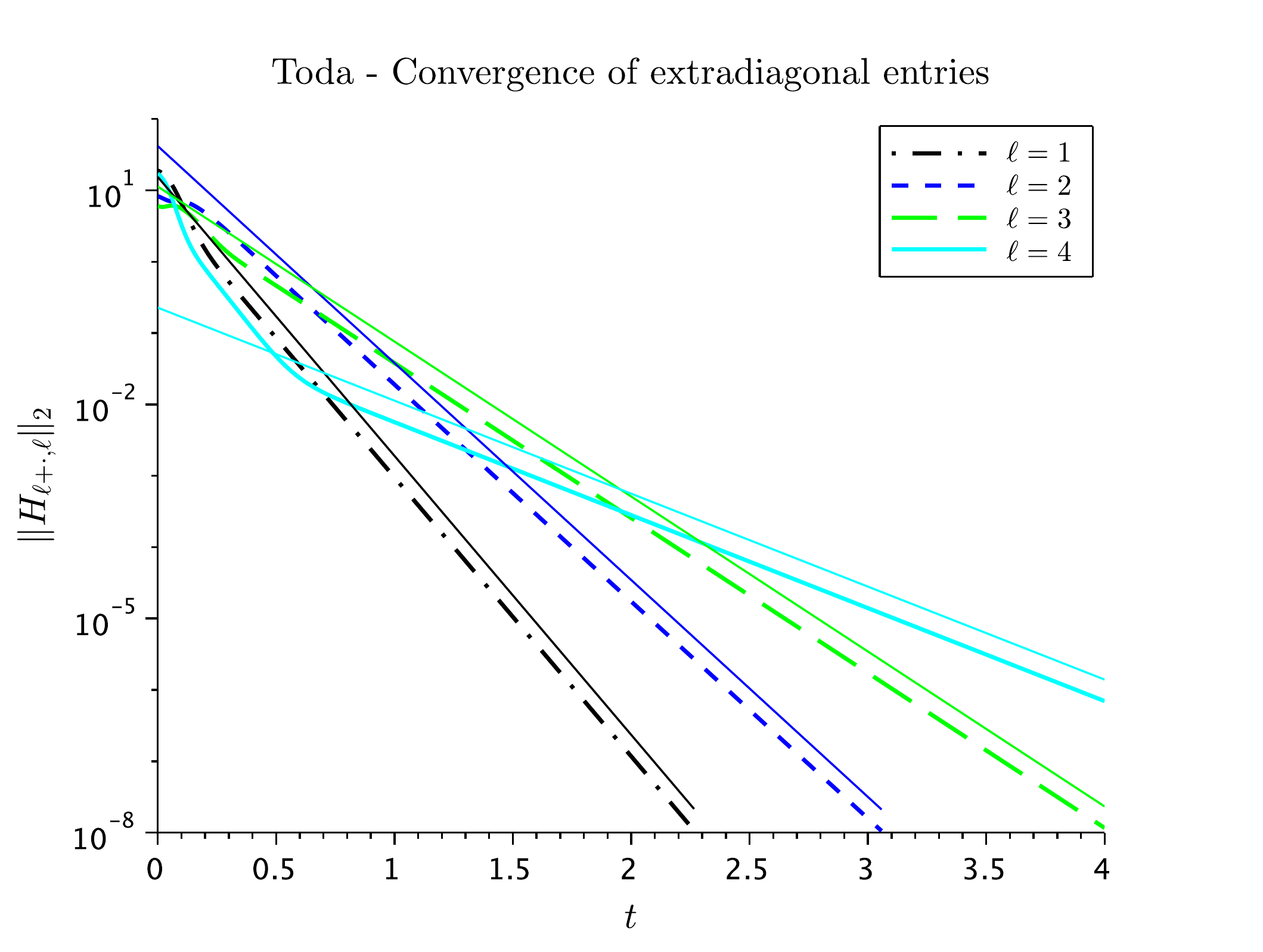}
\caption{Toda flow -- Convergence of extradiagonal entries.}
\label{figure.toda2}
\end{figure}

\subsubsection{Relation with the QR algorithm}
The exponential relations in Lemma~\ref{lemma.ChuNorris} are the keystone for the connection of the Toda flow to the well-known QR algorithm. Under assumptions of Proposition~\ref{prop.House}, $\exp(H_0)$ is invertible and diagonalizable with eigenvalues $(e^{\lambda_j})_{j\geq 1}$ with decreasing moduli. This is a sufficient condition for the convergence of the QR algorithm applied to $e^{H_0}$. We already get $g_1(t)\in\mathcal{U}(\Hil)$ and, for that flow, the Cauchy-Lipschitz theorem ensures that $g_2$ takes value in the subset of upper triangular matrices. At $t=1$, $e^{H_0}=g_1(1)g_2(1)$ is therefore nothing but a QR factorization of $e^{H_0}$. Thus $e^{H(1)}=g_2(1)g_1(1)$ corresponds to the first iteration of the discrete algorithm and, since the differential equation is autonomous, that handling iterates along integer times so that the sampling sequence $(e^{H(n)})$ mimics the QR algorithm applied to $e^{H_0}$. In fact, the diagonal coefficients of $g_2$ are not all positive (even if they would be, up to a product with a $\diag(\pm 1)$ matrix, that corresponds to a change in the initial data for $g_1$ and $g_2$ depending on $H_0$), and the two algorithms slightly differ.

\subsubsection{Wegner's choice} 
Consider now the above initial data for the Cauchy problem associated to Wegner's flow. The solution converges to $H_{\infty}=\diag([4,9,16,25,1])$. The exponential rate to the limit is $\gamma=-9$ that matches the expected value. Indeed a simple calculation gives, for $E\in\mathcal{S}(\Hil)$
\[\textsf{d}F_{H_{\infty}}(E) =  [H_\infty,[E,H_{\infty}]]\,,\]
so that for $i<j$
\[\textsf{d}F_{H_{\infty}}(E_{i,j}) = -(\alpha_i-\alpha_j)^2E_{i,j}\,.\]
Therefore, at the limit $H_\infty$, one has the following "matrix-eigenvalue":
\[\begin{pmatrix}
    0  & - 25  & - 144  &- 441 & - 9  \\ 
  - 25  &   0  & - 49 &  - 256  &- 64   \\
  - 144 & - 49  &   0  & - 81   &- 225  \\
  - 441  &- 256 & - 81  &   0  & - 576  \\
  - 9    &- 64   &- 225 & - 576  &  0
\end{pmatrix}\,.\]
\begin{figure}
\includegraphics[scale=0.32]{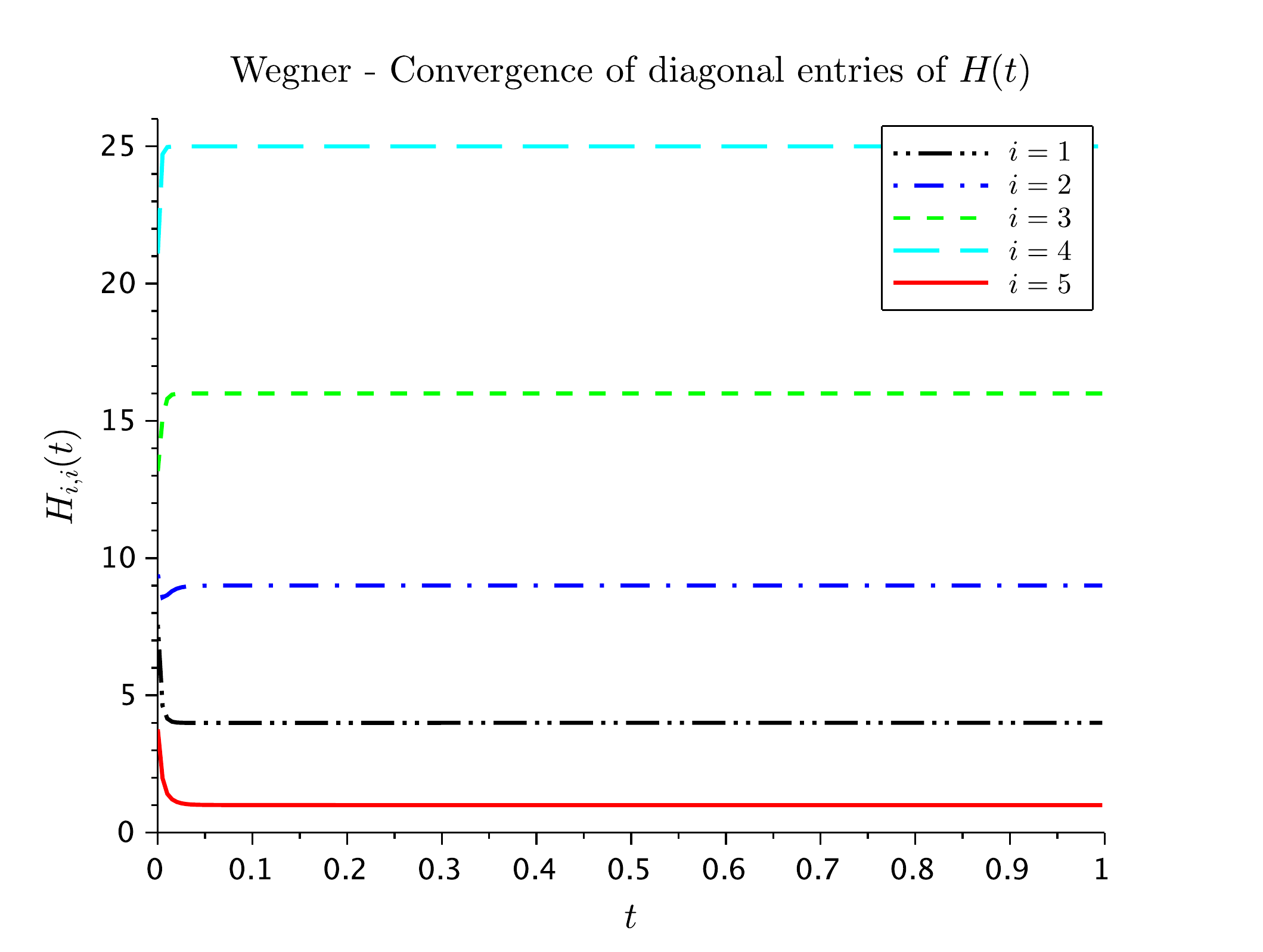}
\includegraphics[scale=0.32]{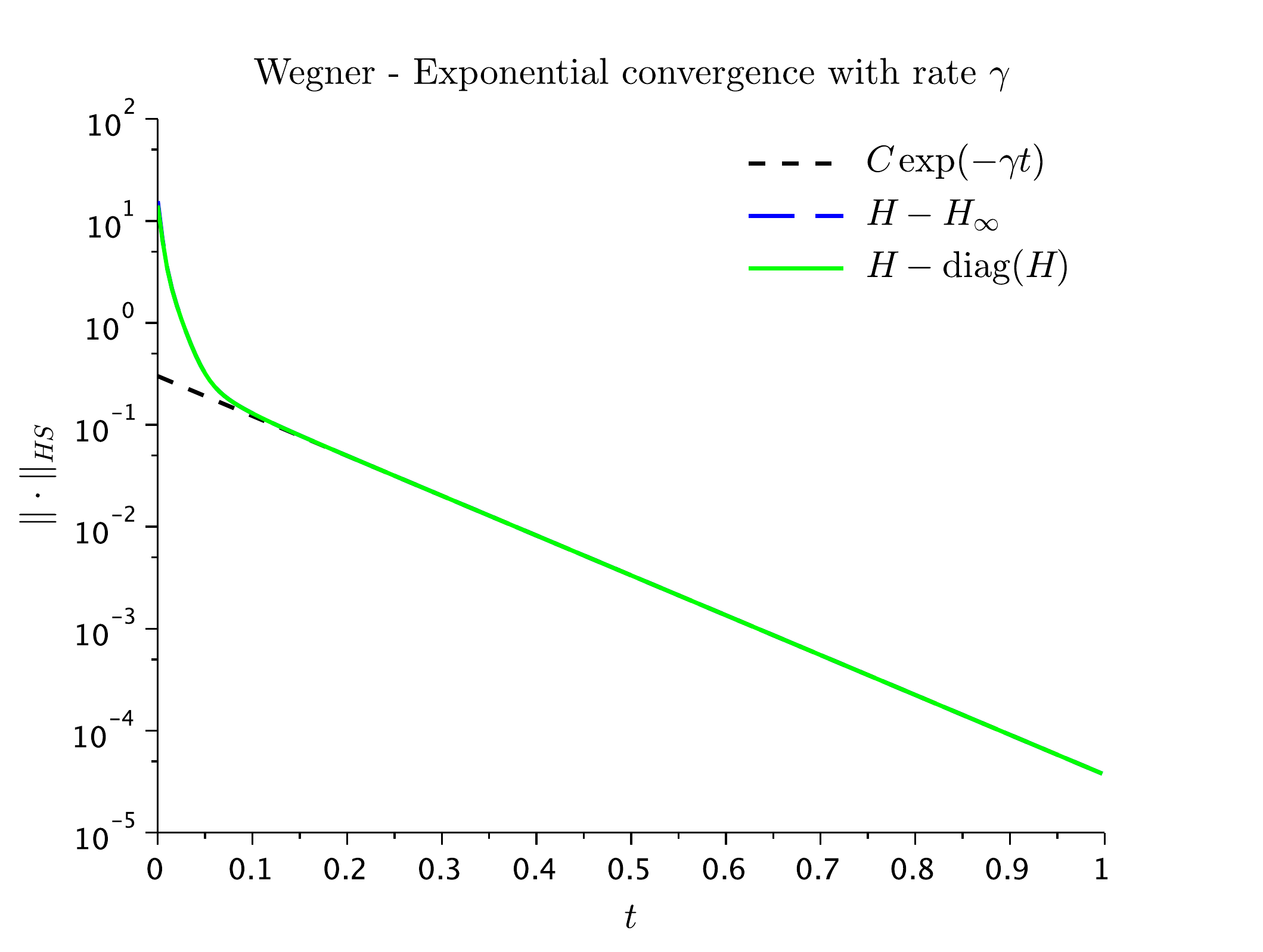}
\caption{Wegner flow -- Convergence of diagonal entries (left), exponential convergence of $H(t)$ (right).}
\label{figure.wegner}
\end{figure}
In that case, for any possible diagonal limit $H_\infty$, the corresponding eigenvalues of $\textsf{d}F_{H_{\infty}}$ are all negative, except 0 that is an eigenvalue of multiplicity $d$.

\end{document}